\newtheorem{thm}{Theorem}[section]
\newtheorem*{starthm}{Theorem}
\newtheorem{prop}[thm]{Proposition}
\newtheorem{lemma}[thm]{Lemma}
\newtheorem{defn}{Definition}
\newtheorem{remark}{Remark}[section]
\def\F{\mathcal F}
\def\AA{\mathbb A}
\def\BB{\mathbb B}
\def\GG{\mathbb G}
\def\LL{\mathbb{L}}
\def\LLA{\mathbb{L}_{\mathbb A}}
\def\LLB{\mathbb{L}_{\mathbb B}}
\def\EE{\mathbb{E}}
\def\WW{\mathbb{W}}
\def\YY{\mathbb{Y}}
\def\F{\mathcal F}
\def\H{\mathcal H}
\def\CC{\mathbb C}
\def\HH{\mathbb H}
\def\QQ{\mathbb Q}
\def\XX{\mathbb X}
\def\LL{\mathbb L}
\def\iota{\mathcal{INV}}
\def\Isom{\rm Isom}
\def\INV{\mathcal I_{hyp} }
\title{Winding and Unwinding and  essential intersections in $\HH^3$}
\author{Jane Gilman}
\address{Department of  Mathematics and Computer Science, Rutgers University, Newark, NJ
 07079}
 \email{gilman@rutgers.edu}
 \thanks{Some of this work was carried out while the author was a visitor at ICERM}
\date{\today}
 \author{Linda Keen}
\address{Department of  Mathematics,  The Graduate
Center and Lehman College,  CUNY, New York, NY 10016}
 \email{LKeen@gc.cuny.edu}
\thanks{This work was supported by a CUNY Collaborative grant and grants from PSC-CUNY}
\subjclass{Primary 30F10,30F35,30F40,30F45; Secondary
14H30,11F99,22E40} \keywords{Fuchsian groups, Schottky groups,
Discrete groups, Hyperbolic Geometry }
\begin{document}
\begin{abstract}

Let $\GG = \langle \AA,\BB\rangle$ be a non-elementary two generator subgroup of the isometry group of $\HH^2$. If $G$ is discrete, free and generated by two hyperbolic elements with disjoint axes, its quotient is a pair of pants and in \cite{GKwords} we produced a recursive formula for the  number of essential self intersections (ESIs) of any primitive geodesic on the quotient.  An alternative non-recursive proof of the formula was given  in \cite{Vidur}.   We used the formula to give a geometric description of how the primitive geodesic winds around the ``cuffs" of the pants.

On  a pair of pants, an ESI is a point  of a ``seam" where a primitive geodesic has a self-intersection.   Here we generalize the definition of  ESI's  to non-elementary purely loxodromic two generator groups $G \subset \Isom(\HH^3)$ which are discrete and free.  We also associate an open geodesic to each generalized ESI we call a {\em Connector.}  In the quotient manifold the connectors correspond to ``opened up self-intersections".  We then define a class of groups we call {\em Winding groups};  the connectors for these groups give us a way generalize   the concept of winding of primitive geodesics in the quotient manifold to this context.  For winding groups,
 the ESI's and connectors   satisfy the  same formula as they do in  the two-dimensional case.  Our techniques involve using the {\sl stopping generators} we defined in \cite{GKwords}  for the group $\GG$ to make it a model.  We then use the  model   to obtain an algebraic  rather than geometric definition for ESI's and generalize from the algebraic definition.
\end{abstract}
\maketitle

\section{Introduction}

In \cite{GKwords} we considered some geometric implications of the Gilman-Maskit discreteness algorithm, \cite{GM}, for non-elementary two generator subgroups of $\Isom(\HH^2)$ whose generators were hyperbolic with disjoint axes and  whose product was also hyperbolic.   We showed that when the algorithm determines that the group is discrete and free, the quotient is a three holed sphere and the algorithm  has determined discreteness by finding a special pair of {\sl stopping generators} that correspond to the only three simple geodesics on the quotient surface, those around the waist and the cuffs.   Every other geodesic on the quotient  winds around the waist and cuffs and  has self-intersections.  We concentrate our attention on the {\em primitive geodesics} on the quotient. A primitive element of the fundamental group of the surface is one that together with one other element generates the fundamental group. A {\sl primitive curve}  is the geodesic in the free homotopy class of such a primitive element. A pair of such geodesics coming from a pair of elements that generate the fundamental group is a {\sl  primtive pair}.   These conjugacy classes of primitive geodesics can be identified with the set of rational numbers, \cite{Enumeration,  KS1}.

In discreteness considerations, it is standard to consider a  group generated by three involutions in which the two  generator group sits as a subgroup of index two. This can be done for two generator groups both in $\Isom(\HH^2)$ and $\Isom(\HH^3)$.   For the subgroups of $\HH^2$   under consideration here,  the fixed points of these involutions project to geodesics on the surface we call {\sl seams}.   Some of the self-intersections of the primitive geodesics necessarily lie on these   seams.  In \cite{GKwords}  we called these {\em Essential Self-Intersections} to differentiate   them from  other self-intersections.    We produced a recursive formula to count the essential self   intersections of the primitive geodesics.    We showed that in the quotient, one could view the primitive geodesics as curves that {\sl wind around}  the simple geodesics and that the winding reflects the rational identified with the geodesic.     A non-recursive version of the formula for essential self-intersections was given in \cite{Vidur} along with a proof of the recursive version.

In this paper we  generalize the concept of Essential Self-Intersections (ESI's) of   primitive geodesics to non-elementary  two generator free discrete subgroups  of $\Isom(\HH^3)$.  The hyperbolic quotient manifold is now a genus two handlebody and the group elements correspond to non-trivial free homotopy classes of curves in the manifold.   Each  class has a unique geodesic and to simplify the exposition, we do not distinguish between the  group element and the geodesic in the free homotopy class it represents. Also, for the sake of brevity and clarity  of the exposition we restrict our discussion to purely loxodromic groups.  The results can be extended to groups containing parabolic elements as our two dimensional treatment did.

 We note that in \cite{Vidur} the original   definition of ESI's and the recursive formulas were  extended to discrete groups that included elliptic elements  and it is likely that these results can also be extended to the three dimensional case.

  Generically, the projections of the axes of primitive elements of the group are simple closed curves in the quotient manifold so the there are no self-intersections.   Therefore, in order to generalize the ESI's we need to understand what happens to the intersection points when the subgroup of $\Isom(\HH^2)$ is deformed into a subgroup of $\Isom(\HH^3)$; that is, to understand how the geodesic ``opens up'' under the deformation.   To this end we associate open geodesic arcs we call  {\em connectors} to
  the generalized ESI's  we define for these more general groups. The connectors are perpendicular to a primitive geodesic in the quotient manifold and represent the opened up intersection point.

 In \cite{GKhyp} we showed that the involutions of the degree two extension of the group induce a {\em hyperelliptic}  involution of the quotient manifold that  generalizes the hyperelliptic involution of its boundary.  The fixed set of this involution consists of three bi-infinite open geodesic arcs and  is a  generalization of  the seams.  We prove

\begin{starthm}[Theorem~A] The connectors associated to the generalized ESI's are arcs joining points on the seams  and they are invariant under the generalized hyperelliptic involution;  they satisfy the same two formulas as ESI's satisfy in the two dimensional case.
\end{starthm}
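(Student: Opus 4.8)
The plan is to move the statement into the algebraic model furnished by the stopping generators, where both ``essential self-intersection'' and ``connector'' become combinatorial data attached to the primitive word, and then to carry out a single equivariant computation in $\HH^3$.

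First I would recall from \cite{GKwords} the algebraic description of an ESI. In the model the primitive element indexed by $p/q$ is a definite reduced word $W_{p/q}$ in the stopping generators, and in the $\HH^2$ quotient an ESI of its geodesic is detected by a cyclic segment of $W_{p/q}$ — equivalently a $\GG$-translate $\alpha$ of the axis $\mathrm{axis}(W_{p/q})$ — together with one of the half-turns $h\in\{h_1,h_2,h_3\}$ generating the index-two extension $\widehat\GG$, subject to the condition that $h$ conjugate $\alpha$ onto another $\GG$-translate $\alpha'$ of $\mathrm{axis}(W_{p/q}^{\pm1})$. Let $\mathcal S(p/q)$ be the resulting finite set of admissible data, taken up to $\GG$-equivalence. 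The structural point is that $\mathcal S(p/q)$ is purely algebraic: the stopping generators, the word $W_{p/q}$, the half-turns $h_i$, and the conjugation relations all persist verbatim when $\GG$ is deformed from $\Isom(\HH^2)$ into a discrete, free, purely loxodromic subgroup of $\Isom(\HH^3)$, the group remaining discrete and free by hypothesis. Both the recursive formula of \cite{GKwords} and the non-recursive formula of \cite{Vidur} are assertions about the single integer $|\mathcal S(p/q)|$; hence, once I exhibit a bijection between $\mathcal S(p/q)$ and the set of connectors of the $p/q$ geodesic in $\HH^3/\GG$, both formulas transfer with no additional computation.

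Next I would construct the connector attached to a datum $(\alpha,h)\in\mathcal S(p/q)$ and verify its geometry. In $\HH^3$ the half-turn $h$ fixes a geodesic $\ell=\mathrm{Fix}(h)$, which is a lift of one of the three generalized seams by \cite{GKhyp}, and it carries $\alpha$ to a second lift $\alpha'$ of the primitive geodesic; generically $\alpha\cap\alpha'=\varnothing$. I define the lifted connector $\widetilde c$ to be the common perpendicular of $\alpha$ and $\alpha'$, so that its projection $c$ is a geodesic arc meeting the primitive geodesic orthogonally — the ``opened up'' self-intersection, which collapses in the $\HH^2$ limit to the ESI point on the seam. Since $h$ interchanges $\alpha$ and $\alpha'$ it preserves $\widetilde c$ while reversing its orientation, so the seam-lift $\ell$ meets $\widetilde c$ orthogonally at its midpoint; and to place the two feet of $\widetilde c$ on seams as well I would read off from the stopping-generator shape of the cyclic segment the additional half-turns whose fixed geodesics — again lifts of seams — pass through the feet on $\alpha$ and $\alpha'$. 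Projecting, $\widetilde c$ becomes an arc joining points of the seams, which is the first assertion. Invariance under the generalized hyperelliptic involution $\tau$ is then immediate: by \cite{GKhyp} every element of $\widehat\GG\setminus\GG$ descends to $\tau$, and $h$ itself lies in $\widehat\GG\setminus\GG$ with $h(\widetilde c)=\widetilde c$, so $\tau(c)=c$.

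The step I expect to be the main obstacle is the precise claim that the feet of $\widetilde c$ land on seam-lifts rather than merely somewhere along the primitive geodesic: a common perpendicular of two translated axes a priori has its endpoints on those axes, and it is exactly the combinatorial shape of the admissible words in $\mathcal S(p/q)$ — the role of the stopping generators and, for winding groups, the way $W_{p/q}$ threads through the half-turns — that should force the feet to be half-turn fixed points. Pinning this down, together with checking that no connectors are created or destroyed across the $\HH^2\to\HH^3$ deformation so that $\mathcal S(p/q)\leftrightarrow\{\text{connectors}\}$ is genuinely a bijection, is where the real work lies; the two enumerative formulas then follow formally from the two-dimensional results.
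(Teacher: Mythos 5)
Your route is the paper's own: transfer the combinatorial ESI data from the model group with stopping generators to $G$ via the isomorphism, realize each datum as a common perpendicular in $\HH^3$, and let both counting formulas follow from the two‑dimensional case because the labelled set is unchanged. Your connector --- the common perpendicular of $\alpha$ and $\alpha'=h(\alpha)$ --- is in fact the same geodesic as the paper's $O^{p/q}_j$, defined there as the common orthogonal of the pair $(L^{p/q}_j,Ax_{E_{p/q}})$: since $h$ swaps $\alpha$ and $\alpha'$ it preserves their common perpendicular, which is therefore orthogonal to the fixed line of $h$, and uniqueness of common perpendiculars identifies the two constructions. The invariance argument under the generalized hyperelliptic involution is also the one in the paper.

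Where you go astray is the closing paragraph, which locates ``the real work'' in two places where there is none. First, there is nothing to check about connectors being created or destroyed under the deformation: in Definition~\ref{connector} the connectors \emph{are} the common orthogonals of the transferred pairs of Definition~\ref{combESI}, so the bijection with the combinatorial data is definitional; the only verification needed is that the pairs of geodesics are well defined (distinct endpoints), which the paper gets from discreteness, freeness and pure loxodromy. The paper explicitly warns that connectors depend on the presentation and that most transversals are \emph{not} connectors, so an intrinsic geometric characterization --- which is what would require your ``no creation/destruction'' argument --- is deliberately avoided. Second, the feet of $\widetilde c$ do not land on seam‑lifts and are not supposed to: in Theorem~\ref{thm:connectors} the connector joins two marked points on the primitive geodesic $\gamma_{p/q}$ and meets exactly one seam, orthogonally at the $h$‑fixed midpoint you already identified; that single crossing is all that survives of ``joining points on the seams'' in the body of the paper. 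The lemma you propose to chase is false in general --- a foot of the common perpendicular is just a point of $Ax_{E_{p/q}}$ and generically lies on no half‑turn line; the two‑dimensional picture in which the ESI point sits on a seam is recovered only in the degenerate case where $Ax_{E_{p/q}}$ and $L^{p/q}_j$ actually intersect. Dropping that paragraph leaves a proof that matches the paper's.
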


We then define a certain class   of these non-elementary rank two discrete free discrete subgroups of $\Isom(\HH^3)$ that  we call {\em Winding groups}.
For these groups  we can define an analogue of the stopping generators and a geometric analogue of winding.  For this class of groups we prove

\begin{starthm}[Theorem~B]  For every primitive geodesic in the quotient manifold, there is an analogue of the {\sl winding} we found in the two dimensional case. This winding reflects the rational number identified with the geodesic. \end{starthm}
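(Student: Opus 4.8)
The plan is to deduce Theorem~B from Theorem~A together with the two--dimensional results of \cite{GKwords}, using the model structure provided by the stopping generators of a Winding group.

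\textbf{Step 1: the combinatorial skeleton.} For a Winding group $\GG=\langle\AA,\BB\rangle$, the stopping generators present $\GG$ as a model in the sense of \cite{GKwords}: every primitive conjugacy class is represented by a reduced cyclic word $W_{p/q}$ in the stopping generators whose syllable pattern is governed by the Stern--Brocot / continued--fraction expansion of the rational $p/q$. Since $\GG$ is free, this pattern coincides with the one in the Fuchsian case, so $W_{p/q}$ already carries the purely combinatorial ``winding data'' of $p/q$, namely the ordered list of exponents with which the successive stopping generators occur. In \cite{GKwords} this list was shown to record the winding of the primitive geodesic about the cuffs and waist of the pair of pants, so it suffices to show that, for a Winding group, this same list describes the geometric winding we have defined via the connectors.

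\textbf{Step 2: from the word to the geodesic.} I would translate the combinatorial data of Step~1 into the geometry of the quotient handlebody $M=\HH^3/\GG$. Lift the primitive geodesic $\gamma_{p/q}$ to the axis $\widetilde\gamma$ of $W_{p/q}$. By Theorem~A the connectors of $\gamma_{p/q}$ meet the three seams, are permuted by the generalized hyperelliptic involution, and are perpendicular to $\gamma_{p/q}$; their lifts therefore cross $\widetilde\gamma$ perpendicularly and cut it into a cyclic sequence of sub--arcs. The key claim is that this sequence of sub--arcs is in order--preserving bijection with the syllables of $W_{p/q}$: a maximal block consisting of a single stopping generator contributes a run of sub--arcs, one per occurrence, each of which spirals once about the corresponding $\GG$--translate of that generator's axis, and the connector terminating the block is the arc that carries $\widetilde\gamma$ across a seam to the next core axis. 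This is exactly where the model hypothesis is used, since in the model the feet of the perpendiculars realizing the connectors, and the translates of the core axes they run to, can be computed directly from the word $W_{p/q}$.

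\textbf{Step 3: counting.} Combining Steps~1 and~2, the geometric winding of $\gamma_{p/q}$ about each core geodesic of $M$ equals the corresponding entry of the combinatorial winding data and is thus a function of the continued--fraction expansion of $p/q$. Moreover, by Theorem~A the connectors obey the same recursive and non--recursive formulas as the ESI count in dimension two, which is the numerical shadow of this correspondence; this establishes Theorem~B.

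\textbf{The main obstacle.} The serious difficulty is Step~2. Unlike in the Fuchsian case, in $\HH^3$ the axis $\widetilde\gamma$ generically misses all of its $\GG$--translates, so there are no genuine self--intersections to organize and the winding must be reconstructed entirely from the connectors. One must show that the perpendiculars realizing the connectors are pairwise distinct, are linearly ordered along $\widetilde\gamma$ in the same cyclic pattern as the ESIs in two dimensions, and do not change combinatorial type as one deforms off the Fuchsian locus inside the space of Winding groups. I expect these facts to follow from the hyperelliptic invariance of the connectors in Theorem~A, which pins them to the seams and constrains their cyclic order, combined with a persistence argument along a path of Winding groups issuing from a Fuchsian group; the delicate estimate is to rule out the collision of two connectors along such a path, which comes down to controlling the feet of the perpendiculars as functions of the deformation parameter.
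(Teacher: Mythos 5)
There is a genuine gap, and you have located it yourself: your Step~2 is the entire content of Theorem~B, and you do not prove it. Your proposed remedy --- a persistence argument along a path of Winding groups issuing from a Fuchsian group, plus a ``delicate estimate'' ruling out collisions of connectors --- is left completely open, and it also presupposes facts that are nowhere established: that the given Winding group can be joined to a Fuchsian group by a path \emph{staying inside} the class of Winding groups, and that the combinatorial type of the connector configuration is locally constant along such a path. Neither is obvious, and the paper does not argue this way. More tellingly, your proposal never actually invokes the defining property of a Winding group --- that the pleating locus of $\partial\mathcal C/G$ is rational and its pleating curves are precisely the projections of $Ax_A$, $Ax_B$, $Ax_{A^{-1}B}$. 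Since Theorem~B is asserted only for this class, any proof that does not use this hypothesis cannot be complete.

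The paper's mechanism is a direct convexity argument (Proposition~5.3), not a deformation. Because the three axes $Ax_A$, $Ax_B$, $Ax_{A^{-1}B}$ are pleating curves, they lie on the boundary of the convex hull $\mathcal C$, every other axis lies in its interior, and no axis can meet $\partial\mathcal C$ transversally. Combining this with the $H_L$-symmetry of the palindromes (for $p+q$ odd the axis of $E_{p/q}$ is forced to meet $L$, and it must do so inside $\mathcal C$; for $p+q$ even one argues with the common perpendicular of $E_{p/q}$ and $\tilde E_{p/q}=H_LE_{p/q}H_L$) yields the three-dimensional analogues of Propositions~3.1 and~3.2 for the polygon $\F=\H(A,B)\cup\overline{\H(A,B)}$, whose axis sides now project to pleating curves. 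That is exactly what pins the connectors to the correct translates of $L$, $L_A$, $L_B$ in the correct cyclic order along the axis, so that the loop decomposition of Theorem~4.3 becomes genuine winding about $\gamma_0$ and $\gamma_\infty$ governed by the continued fraction of $p/q$. If you want to salvage your outline, replace the persistence argument in Step~2 by this convex-hull argument; as written, the proposal does not close.
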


  The paper is organized as follows.   In Section~2 we give some background, define the terms we use and the enumeration scheme. In Section~3 we discuss the model group $\GG=\langle \AA,\BB \rangle \subset \HH^2$ and  review  Essential Self Intersections for this fixed group as an example of a non-elementary discrete free rank two purely hyperbolic group in $\Isom(\HH^2)$ whose generators have disjoint axes.  In Section~4   we use  the natural isomorphism from the  model group to the discrete free purely loxodromic subgroup $G$ of $\Isom(\HH^3)$ to label elements of $G$ and its degree two extension $\tilde{G}$.  We then use the labelling  to define the  generalized ESI's and the connectors and   we prove Theorem~A for non-elementary discrete,  free, purely loxodromic groups.  In Section~5 we define a class of groups for which we prove Theorem~B.  We call these  {\em winding groups};  they have
   the properties that the quotient manifold has a rational pleating locus (defined in that section) and any pair of pleating curves are primitive geodesics that can be represented by a pair of generators of the group.   We expect that there are other classes of groups for which Theorem~B holds but we leave that discussion for the future.

 \section{Background and Terminology}\label{background}
We use the following terminology.  We consider $\HH^2 \subset \HH^3$ so all of the following makes sense for both the two and three dimensional settings.
   If    $M$ is a geodesic in $\HH^3$ we let $H_M$ denote the half-turn about $M$; this isometry is an orientation preserving  involution whose set of fixed points is the geodesic $M$.  If $X$ is an isometry we let $Ax_X$ denote the invariant geodesic joining its fixed points; these may be on the boundary of $\HH^3$.   This geodesic is called its axis.  If $X$ has only one fixed point it is parabolic and the axis is the fixed point.  When we restrict to $\HH^2$, the axis of an elliptic is also a point.   For brevity and clarity of exposition,  in this paper, we will always assume that the isometries we encounter are not parabolic and those that are elliptic have order two   and are  half-turns.  An element of order two is a half-turn if and only if its axis is fixed pointwise.

   If $M$ is any  geodesic orthogonal to $Ax_X$, there is another geodesic $M'$ orthogonal to $Ax_X$ such that $X$ is the product of the two half-turns,  $X=H_{L_M}H_{L_{M'}}$.    Note that any two geodesics in $\HH^3$ have a unique common perpendicular. Any element of $\Isom(\HH^3)$ can be factored in an infinite number of ways as a  the product of   half-turns about a pair of geodesics perpendicular to its axis;   if $X$ is loxodromic, the hyperbolic distance between the pair along the axis is half the translation length while if $X$ is elliptic they intersect the axis at the same point  and if $X$ is parabolic they intersect at a point on the boundary of the hyperbolic space.

  We are interested in discrete free groups $G$ of rank two;  we always assume the groups are non-elementary and, for the sake of brevity, give no further mention of that assumption.
  We will be concerned with {\em primitive elements of $G$}.  For a free group of rank $n$, an element is {\em primitive}  if it is part of a minimal generating set \cite{MKS}.    We call a geodesic {\em primitive}  if it is in the free homotopy class of the fundamental group of the quotient represented by a  primitive element.   In this paper,  a minimal generating set is a {\em primitive pair}.

  If $G$ is generated by $A$ and $B$, the axes of $A$ and $B$ have a common perpendicular $L$  and there are a unique pair of geodesics $L_A$ and $L_B$ such that $A= H_L \circ H_{L_A}$ and $B = H_L \circ H_{L_B}$.

  We denote by $\tilde{G}$  the group generated by the three half-turns
  \[ \tilde{G}= \langle H_L, H_{L_A}, H_{L_B} \rangle. \]
  It is clear that $G \subset \tilde{G}$ and that  the index of $G$ in $\tilde{G}$ is $2$.  It is a classical result that a group and a normal subgroup of finite index  are simultaneously discrete and or non-discrete.

  We have $A^{-1}B = H_{L_A} \circ H_{L_B}$ and its axis is the common perpendicular to $L_A$ and $L_B$;  also $AB= H_L \circ H_{L_A} \circ
  H_L \circ H_{L_B}$  and its axis is the common perpendicular to $ H_L \circ H_{L_A} \circ H_L$ and $H_{L_B}$.
  The cyclically ordered set of  lines
  \[ Ax_A, L_A,  Ax_{A^{-1}B},  L_B, Ax_B, L \]
  consists of six geodesics with the property that each is orthogonal to both of its neighbors.   They form a {\em right-angled hexagon} as defined by Fenchel in \cite{Fenchel}.    We denote this hexagon by $\H(A,B)$;  it has three {\em axis sides} and three {\em half-turn sides}.

   Note that in our discussion of subgroups of $\Isom(\HH^2)$ below, we always assume the axes of the given generators of our group are disjoint.   The  case where the axes intersect is subsumed in   the general discussion of subgroups of $\Isom(\HH^3)$ above.  Again for the sake of brevity, we leave a more detailed discussion of results about essential intersections for  groups with intersecting axes for another time.

If $G$ acts invariantly on $\HH^2$ and is discrete and free, and if the axes of the generators are disjoint, the quotient $S=\HH^2/G$ is a three-holed sphere or pair of pants.   A primitive  curve on $S$ is the image of the axis of a primitive element in the group.  The quotient $S$ is a two-fold covering of the quotient $\tilde{S}=\HH^2/\tilde{G}$ which is an orbifold;  we can visualize it as identifying the front and back sides of a pair of pants.  The orbifold points are the projections of the half-turn lines; we call these {\sl the seams} of the pants.  The involution that identifies the front and back is induced by any one of the half-turns in $\tilde{G}$.

\subsection{Enumeration of Primitive Elements}

As we indicated in the introduction there are various ways to enumerate the conjugacy classes of  primitive words.  Each is an inductive scheme that associates a representative of the conjugacy class of primitive words to a  rational number $p/q$.  The scheme lists a word but not its inverse. The generators $A,B$ are assigned to $0/1$ and $1/0$ respectively and the representative word is built up from a sequence of steps using the Farey tessellation of the upper half plane.  In \cite{Enumeration} we denoted the representative words by $E_{p/q}$  and called the enumeration scheme produced there  the {\em palindromic enumeration scheme}. In that scheme, if $p+q$ is odd ($pq$ is even),  then the word $E_{p/q}$ is a palindrome in $A$ and $B$ while if $p+q$ ($pq$ odd)  is even, the word $E_{p/q}$ is a product of the two words in the previous two steps of the scheme; these last two words are palindromes and the product is chosen so that the next word in the scheme is again a palindrome.   This scheme reflects the $H_L$ symmetry of the group because the axes of the palindromic words in this scheme are all perpendicular to the half-turn line $L$.

 We will use the palindromic scheme here and assume $p/q>0$.      Suppose first that $E_{p/q}$ is a palindrome.  Then    $p+q=2n+1$ and $E_{p/q}$ has the form $WU \overline{W}$ where $U \in \{A,B\}$ and $W=X_1X_2 \ldots X_n$ is a word of length $n=(p+q-1)/2$, $X_j \in \{A,B\}$ and $ \overline{W}=X_n \ldots X_1$.   If $E_{p/q}$ is not a palindrome so that
  $p+q=2n$ is even, we write $E_{p/q}=X_1 \ldots X_{2n}$.   There are a unique pair of rationals $l/m,r/s$ with $| ls-mr |=1$ such that   $\frac{p}{q}=\frac{l}{m}+\frac{r}{s}$.  We set   $E_{p/q}=E_{l/m}E_{r/s}$ or  $E_{p/q}=E_{r/s}E_{l/m}$ where the choice is made so that the next words in the scheme,  $E_{\frac{p+l}{q+m}}$ and $E_{\frac{p+r}{q+s}}$ are palindromes.   We omit the details here.  The interested reader  can find them in \cite{GKwords, Enumeration}.

  The forms that the words take can be made more explicit, but we will not need them here.  The only property  of these words  that we will need  is
 \begin{remark} \label{samesign}     In the identification of a rational  and a primitive word,   each time a generator appears in a given word, its exponent has the same sign. That is, in the palindromic enumerations  scheme,  if $p/q>0$ all of the exponents of $A$ and $B$ are positive whereas if $p/q<0$ the exponents of $A$ are all negative and the exponents of $B$ are all positive.  Since $B^{-1}$ never appears, no word is an inverse of another.
 \end{remark}

\section{$\HH^2$: The Model Group}
In this section we assume that we are given a  fixed group $\GG=\langle \AA,\BB \rangle  \subset \Isom(\HH^2)$  where the axes of $\AA$ and $\AA\BB$ are disjoint and that $\AA,\BB$ are stopping generators for the Gilman-Maskit algorithm.   By results of \cite{GKwords}, this means that their axes and the axis of $\AA^{-1}\BB$ project to simple curves on the quotient surface $S$ which is a three-holed sphere.   All other axes of elements of $\GG$ project to curves with self-intersections. We also consider the degree two extension  $\tilde\GG=\langle H_{\LL}, H_{\LLA}, H_{\LLB} \rangle$ of $\GG$.

The condition that  $\AA,\BB$ are stopping generators also means that the hexagon $\H(\AA,\BB)$ is convex in $\HH^2$ and moreover, that
\[ \F= \H(\AA,\BB) \cup H_{\LL}(\H(\AA,\BB)) \]
is a fundamental domain for $\GG$.   It has the property that it is symmetric about the line $\LL$ where the symmetry is the half-turn $H_{\LL}$.
Note that we use an overline to denote the image under $H_{\LL}$ so that $\overline{\H(\AA,\BB)}=H_{\LL}(\H(\AA,\BB))$ is also a convex hexagon with three axis sides,  the axes of $\AA$, $\BB$ and $\BB\AA^{-1}$, and three half-turn sides $\LL$, $\LL_{\bar{\AA}}=H_{\LL}(\LLA)$ and $\LL_{\bar{\BB}}=H_{\LL}(\LLB)$.   The hexagons $\H(\AA,\BB)$ and $\overline{\H(\AA,\BB)}$ are each fundamental domains for $\tilde{G}$.

In this model group the axes of the primitive elements $\EE{p/q}$ have geometric properties we want to look at.  We assume throughout that $p/q>0$.  To obtain  the primitive words in the group in which $\AA$ appears with negative exponent and $-1<p/q<0$, we need to replace $\F$ by the fundamental domain for
$\GG$ given by
\[ \F_{\BB}= \H(\AA,\BB) \cup H_{\LL_{\BB}}(\H(\AA,\BB) \]
and in the rest of the discussion,   to replace $\LL$ by $\LLB$.   For $p/q<-1$, we interchange the roles of $\AA$ and $\BB$; that is we   replace $\F$ by the fundamental domain for
$\GG$ given by
\[ \F_{\AA}= \H(\AA,\BB) \cup H_{\LL_{\AA}}(\H(\AA,\BB)\]
and in the rest of the discussion replace $\LL$ by $\LLA$. We will not carry this out but leave it to the reader.

 Some of the propositions below and their proofs appear in one or more of \cite{Enumeration, KS1, Vidur}, but because of their importance to the new definitions in $\HH^3$, we  state the results using the current notation and give detailed proofs.

 \begin{prop}\label{thru F} The axis of $\EE_{p/q}$ intersects  $\partial\F$ in half-turn sides.  \end{prop}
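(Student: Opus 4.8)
\emph{Proof proposal.} The plan is to split the sides of $\partial\F$ into the four \emph{half-turn sides} and the four \emph{axis sides}, to show that the axis sides are arcs of lifts of the three boundary geodesics of the convex core of $S$, and to show that $Ax_{\EE_{p/q}}$ --- being the lift of a closed geodesic on $S$ that is \emph{not} a boundary geodesic --- is disjoint from those lifts; it then follows at once that $Ax_{\EE_{p/q}}$ can meet $\partial\F$ only along half-turn sides. For the first point I would unpack $\F=\H(\AA,\BB)\cup\overline{\H(\AA,\BB)}$: these two convex right-angled hexagons are glued along their common half-turn side $\LL$, and $H_{\LL}$ fixes $Ax_{\AA}$ and $Ax_{\BB}$ setwise while carrying $Ax_{\AA^{-1}\BB}$ to $Ax_{\BB\AA^{-1}}$, so $\LL$ becomes interior to $\F$ and $\partial\F$ is an octagon whose eight sides alternate between the four half-turn sides $\LLA,\LLB,\LL_{\bar{\AA}},\LL_{\bar{\BB}}$ and four axis sides, which are arcs of $Ax_{\AA}$, $Ax_{\AA^{-1}\BB}$, $Ax_{\BB}$, $Ax_{\BB\AA^{-1}}$; in particular every vertex of $\F$ is an endpoint of an axis side. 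From $\AA=H_{\LL}H_{\LLA}$ one reads off $\AA(\LLA)=H_{\LL}(\LLA)=\LL_{\bar{\AA}}$ and likewise $\BB(\LLB)=\LL_{\bar{\BB}}$, so the side pairings of $\F$ are exactly $\AA^{\pm1}$ and $\BB^{\pm1}$, pairing the half-turn sides, while the axis sides are unpaired and project onto $\partial S$.

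Next I would identify the axis sides with lifts of the convex-core boundary and then locate $Ax_{\EE_{p/q}}$. The stopping-generator hypothesis recalled from \cite{GKwords} says that the projections to $S$ of $Ax_{\AA}$, $Ax_{\BB}$, $Ax_{\AA^{-1}\BB}$ are precisely the three simple closed geodesics on the pair of pants $S$, i.e.\ the three boundary geodesics of its convex core; and since $\BB\AA^{-1}=\BB(\AA^{-1}\BB)\BB^{-1}$, the line $Ax_{\BB\AA^{-1}}$ is a $\GG$-translate of $Ax_{\AA^{-1}\BB}$. Hence each axis side of $\F$, together with its two endpoints, lies on a geodesic in the full preimage in $\HH^2$ of those three boundary geodesics. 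On the other hand, for $p/q>0$ the representative $\EE_{p/q}$ is a positive word involving both generators (Remark~\ref{samesign}); it is therefore cyclically reduced, its cyclic word contains both $\AA$ and $\BB$ and no inverse letter, and so $\EE_{p/q}$ is conjugate to none of $\AA$, $\BB$, $\AA^{-1}\BB$. Thus the projection of $Ax_{\EE_{p/q}}$ to $S$ is a closed geodesic distinct from all three boundary geodesics, hence it lies in the \emph{open} convex core and is disjoint from those three geodesics; lifting, $Ax_{\EE_{p/q}}$ is disjoint from the full preimage of the three boundary geodesics.

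Combining the two steps, $Ax_{\EE_{p/q}}$ misses every axis side of $\F$ and every vertex of $\F$, so each point of $Ax_{\EE_{p/q}}\cap\partial\F$ lies in the relative interior of one of the half-turn sides $\LLA,\LLB,\LL_{\bar{\AA}},\LL_{\bar{\BB}}$, which is the assertion. (That the intersection is in fact non-empty, so the statement is not vacuous, is a separate matter: it holds for the palindromic representatives because their axes are orthogonal to the interior side $\LL$, and for the other representatives after passing to the conjugate whose axis meets $\F$; but it plays no role in the proof.) The step I expect to carry the genuine content, as opposed to bookkeeping, is the identification in the second paragraph --- that $Ax_{\AA}$, $Ax_{\BB}$, $Ax_{\AA^{-1}\BB}$ project to the three simple closed geodesics of $S$ --- which is exactly what the Gilman--Maskit / stopping-generators analysis of \cite{GKwords} yields and may be invoked here; everything else is the combinatorics of the side pairings of $\F$ together with one use of Remark~\ref{samesign}.
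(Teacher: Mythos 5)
Your proposal is correct and is essentially the paper's argument: both proofs come down to showing that $Ax_{\EE_{p/q}}$ cannot meet the axis sides of $\partial\F$, which are arcs of the axes of $\AA$, $\BB$, $\AA^{-1}\BB$, $\BB\AA^{-1}$. The paper phrases the key step upstairs in $\HH^2$ (these axes cut off half-planes over intervals of discontinuity that no axis of a group element can enter), while you phrase the same convexity fact downstairs (a closed geodesic on $S$ distinct from the three cuff geodesics lies in the open convex core and so is disjoint from them); the extra side-pairing bookkeeping you include is harmless but not needed.
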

 \begin{proof}  One of the intervals on the circle containing the limit set of $\GG$ bounded by the fixed points of $\AA$ is an interval of discontinuity.  Similarly, one of the intervals bounded by the fixed points of $\BB$ is, as is one of the intervals bounded by the axes of $\AA^{-1}\BB$ and $\BB\AA^{-1}$.  Since these axes  are mutually disjoint and the intervals of discontinuity are mutually disjoint, they are separated by the axes.   The half-turn lines on $\partial\F$  are perpendicular to these axes so each has endpoints inside  the intervals of discontinuity.  No axis of a group element can enter the half planes bounded by these intervals of discontinuity so it cannot intersect any of the axis sides of $\F$.
  \end{proof}

  Note that the same argument shows that the axis any element of $\GG$ cannot intersect any copy of $\F$ in an axis side.

  \begin{prop}\label{intersectL} The axis of $\EE_{p/q}$ intersects $\LL$ inside $\F$. \end{prop}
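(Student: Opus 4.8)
The plan is to read off the geometry of $Ax_{\EE_{p/q}}$ from the reflection–group structure of $\tilde\GG$, with Proposition~\ref{thru F} supplying the fact that the axis actually enters $\F$. First I would rewrite $\EE_{p/q}$ in the three reflection generators of $\tilde\GG$. By Remark~\ref{samesign}, since $p/q>0$ we may take $\EE_{p/q}=Y_1Y_2\cdots Y_\ell$ with every $Y_i\in\{\AA,\BB\}$; substituting $\AA=H_{\LL}\circ H_{\LLA}$ and $\BB=H_{\LL}\circ H_{\LLB}$ gives
\[ \EE_{p/q}=H_{\LL}\,H_{\LL_{Y_1}}\,H_{\LL}\,H_{\LL_{Y_2}}\cdots H_{\LL}\,H_{\LL_{Y_\ell}} .\]
Since $\LL,\LLA,\LLB$ are pairwise disjoint (they are non-adjacent sides of the convex hexagon $\H(\AA,\BB)$), the group $\langle H_{\LL},H_{\LLA},H_{\LLB}\rangle$ is the free product $\ZZ_2*\ZZ_2*\ZZ_2$ (ping-pong), so the displayed word is the reduced form of $\EE_{p/q}$: no two letters from $\{H_{\LLA},H_{\LLB}\}$ ever occur consecutively, even cyclically, the word is cyclically reduced, and it begins with $H_{\LL}$.

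The second, and main, step is the cutting-sequence dictionary for reflection groups: if $w=s_1s_2\cdots s_k$ is a cyclically reduced word in the generating reflections of a discrete group with fundamental polygon $\H(\AA,\BB)$, then $Ax_w$ meets the interior of $\H(\AA,\BB)$, the bi-infinite sequence of tessellation mirrors it crosses spells the cyclic word $w$ up to rotation and inversion, and, in the direction in which $w$ translates, $Ax_w$ exits $\H(\AA,\BB)$ across the side fixed by $s_1$. This is consistent with Proposition~\ref{thru F}, which already tells us $Ax_{\EE_{p/q}}$ enters $\F=\H(\AA,\BB)\cup H_{\LL}(\H(\AA,\BB))$ and does so only through half-turn sides. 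Applying the dictionary with $w=\EE_{p/q}$ and $s_1=H_{\LL}$, the axis $Ax_{\EE_{p/q}}$ exits $\H(\AA,\BB)$ across its $\LL$-side. But $\F$ is glued together precisely along that side, so the $\LL$-side of $\H(\AA,\BB)$ equals the segment $\LL\cap\F$. Hence $Ax_{\EE_{p/q}}$ meets $\LL$ at a point of $\F$; the endpoints of $\LL\cap\F$ are $\LL\cap Ax_{\AA}$ and $\LL\cap Ax_{\BB}$, which lie on axis sides of $\F$ and so are avoided by $Ax_{\EE_{p/q}}$ by Proposition~\ref{thru F}, so this intersection point is interior to $\F$.

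If one prefers not to invoke the cutting-sequence lemma, the conclusion also follows from Proposition~\ref{thru F} alone. The chord $Ax_{\EE_{p/q}}\cap\H(\AA,\BB)$ (a single segment, by convexity) has both endpoints on half-turn sides; if either lies on $\LL$ we are done, so suppose it joins $\LLA$ to $\LLB$. Then $Ax_{\EE_{p/q}}$ would cross a $\LLA$-mirror and then a $\LLB$-mirror of the tessellation with no mirror crossed in between, contradicting the displayed word, in which a letter $H_{\LL}$ always separates $H_{\LLA}$ and $H_{\LLB}$. The case where $Ax_{\EE_{p/q}}$ enters $\overline{\H(\AA,\BB)}$ instead is handled by applying $H_{\LL}$, which fixes both $\LL$ and $\F$ and conjugates $\EE_{p/q}$ to a word with the same separation property. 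For the palindromic words ($p+q$ odd) there is a shorter argument that also displays the $H_{\LL}$-symmetry: $H_{\LL}\EE_{p/q}H_{\LL}=\EE_{p/q}^{-1}$ forces $Ax_{\EE_{p/q}}\perp\LL$, and then $H_{\LL}$-invariance of $Ax_{\EE_{p/q}}$ and of $\F$, together with Proposition~\ref{thru F}, forces the unique point $Ax_{\EE_{p/q}}\cap\LL$ to lie in $\F$.

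The delicate point is the geometry–algebra bridge in Step~2 — equivalently, ruling out the $\LLA$-to-$\LLB$ chord in Step~3: one must verify that the mirrors successively crossed by $Ax_{\EE_{p/q}}$ really do record the reduced word, and exclude degenerate incidences such as the axis running along a mirror or passing through a vertex of a tile. These are routine consequences of Proposition~\ref{thru F} and the convexity of $\H(\AA,\BB)$ — a geodesic meets a mirror transversally in at most one point and cannot meet an axis side at all — but it is cleanest to isolate the cutting-sequence statement as a short lemma before proving the proposition.
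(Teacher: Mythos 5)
Your proposal is correct and follows essentially the same route as the paper: Proposition~\ref{thru F} confines the crossings to half-turn sides, and a segment running from an $\LLA$-type side to a $\LLB$-type side without meeting $\LL$ would force an adjacent pair $H_{\LLA}H_{\LLB}$, i.e.\ a subword $\AA^{-1}\BB$, contradicting Remark~\ref{samesign}. Your free-product/cutting-sequence lemma and the palindromic shortcut merely make explicit the geometry--algebra bridge that the paper's proof uses implicitly.
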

  \begin{proof}  By Proposition~\ref{thru F} the axis of $\EE_{p/q}$ enters and leaves $\F$ through half-turn sides. Write $\EE_{p/q}$ as a product of half-turns.  If, for example, it enters through $\LLA$ and leaves through the sides $\LLB$ the half-turn expression for
  $\EE_{p/q}$ would contain an adjacent pair  $H_{\LLA}H_{\LLB}$ and hence the expression in terms of $\AA$ and $\BB$ would contain $\AA^{-1}\BB$.   This cannot happen if $p/q>0$ by Remark~\ref{samesign}.    The same contradiction would appear in every instance where the axis did not intersect $\LL$.
  \end{proof}

    Our enumeration scheme reflects the symmetry of the domain $\F$ about $\LL$.  We want to exploit this to label the set of copies of $\F$ that a fundamental segment of the axis of $\EE_{p/q}$ passes through.  Suppose first that $p+q=2n+1$ and $\EE_{p/q}$ is a palindrome so that its axis is symmetric about $\LL$.   This means that if $Ax_{\EE_{p/q}}$ leaves $\H(\AA,\BB)$ through $\LLA$, it leaves $\overline{\H(\AA,\BB)}$ through $\LL_{\bar{\AA}}$ and similarly for $\LLB$.
 In addition to passing through $\F$, a symmetric fundamental segment of the axis passes through $n$ copies of $\F=\F_0$ on one side of $\LL$ and $n$ copies on the other.

 We label the sequence of $n$ copies of $\F=\F_0$,  starting with the one adjacent to  $\H(\AA,\BB)$ by
 $\F_{j}, j=1, \ldots, n$ and the sequence of  $n$ copies of $\F=\F_0$,  starting with the one adjacent to  $\overline{\H(\AA,\BB)}$ by
 $\F_{-j}, j=1, \ldots, n$,.  The axis of $\EE_{p/q}$ leaves $\F_0$ through the half-turn sides $L^{p/q}_{1}$ and $L^{p/q}_{-1}$ that as it enters $\F_1$ and $F_{-1}$ respectively.  It
 leaves each $\F_{j}$, $j=-n, \ldots, n$  through a half-turn side that is the image of one of the four half-turn sides of $\F$ by an element of $\GG$.   It also intersects a conjugate of $\LL$ in each copy of $\F$.   We could use the technique of cutting sequences to use groups elements to label these side but the notation is clumsy and we will not need it here  (see \cite{KS1, Vidur}.)  Instead,  we label  the copy of $\LL$ in $\F_j$ by $\LL^{p/q}_{2j}$ and the boundary side through which it leaves $F_j$ by $L^{p/q}_{2j+1}$.     We   label the intersection point of $Ax_{\EE_{p/q}}$ with each $\LL^{p/q}_j$ by $Q^{p/q}_j$, $j=-2n-1, \ldots, 2n+1$.

 Remember that  for each $j$, $j=-2n-1, \ldots, 2n+1$ there is a unique word $\WW^{p/q}_j$ such that
  $\LL^{p/q}_{j}=\WW^{p/q}_j(\YY)$, $\YY \in \{\LL, \LLA, \LLB \}$ so we could use these words as our labels.   The only information  a labelling by group elements instead of indices would give us is that it would be obvious that $\EE_{p/q}(\LL^{p/q}_{-2n-1})=\LL^{p/q}_{2n+1}$ and $\EE_{p/q}(Q^{p/q}_{-2n-1})=Q^{p/q}_{2n+1}$.    We take this for granted here.    It then follows that
 $\EE_{p/q} = H_{\LL^{p/q}_{-2n-1}}H_{\LL}$ so that the axis is perpendicular to $\LL^{p/q}_{-2n-1}$ and $\LL^{p/q}_{2n+1}$ at $Q^{p/q}_{-2n-1}$ and at $Q^{p/q}_{2n+1}$ respectively.

  We also  have
 \begin{prop}\label{nonright}
 The angles  at  intersection   points  $Q_j^{p/q}$, $0<j<2n+1$ are not right.
 \end{prop}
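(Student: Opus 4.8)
The plan is to argue by contradiction: from a hypothetical right-angle crossing I will manufacture a nontrivial element of $\GG$ whose axis is $Ax_{\EE_{p/q}}$ but whose translation length is strictly smaller than that of $\EE_{p/q}$, which is impossible because $\EE_{p/q}$ is primitive.

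So suppose $Ax_{\EE_{p/q}}$ meets $\LL^{p/q}_j$ orthogonally at $Q^{p/q}_j$ for some $j$ with $0<j<2n+1$. Since $p+q=2n+1$, the word $\EE_{p/q}$ is a palindrome, so $Ax_{\EE_{p/q}}$ is symmetric about $\LL=\LL^{p/q}_0$ and in particular also meets $\LL$ orthogonally, at $Q^{p/q}_0$. I would then set $g:=H_{\LL^{p/q}_j}\circ H_{\LL}$. As $\LL^{p/q}_j$ and $\LL$ are both orthogonal to $Ax_{\EE_{p/q}}$ and cross it at the distinct points $Q^{p/q}_j$ and $Q^{p/q}_0$, their common perpendicular is $Ax_{\EE_{p/q}}$ itself, so $g$ is hyperbolic with axis $Ax_{\EE_{p/q}}$ and translation length $2\,d(Q^{p/q}_j,Q^{p/q}_0)>0$. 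Since $\LL^{p/q}_j=\WW^{p/q}_j(\YY)$ with $\YY\in\{\LL,\LLA,\LLB\}$, the half-turn $H_{\LL^{p/q}_j}$ is conjugate in $\tilde\GG$ to one of $H_{\LL},H_{\LLA},H_{\LLB}$, none of which lies in $\GG$; as $\GG$ has index $2$ in $\tilde\GG$, this puts $H_{\LL^{p/q}_j}$, and likewise $H_{\LL}$, in the nontrivial coset, hence $g\in\GG$.

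Next I would use the cutting-sequence structure of the copies $\F_r$ — with Proposition~\ref{intersectL} placing the crossing at $\LL$ in the interior of $\F$ — to see that $Ax_{\EE_{p/q}}$ crosses $\LL^{p/q}_0,\LL^{p/q}_1,\dots,\LL^{p/q}_{2n+1}$ in this order at distinct points, so $Q^{p/q}_j$ lies strictly between $Q^{p/q}_0$ and $Q^{p/q}_{2n+1}$ on the axis. Together with the relation $\EE_{p/q}=H_{\LL^{p/q}_{-2n-1}}\circ H_{\LL}$ and the palindromic symmetry about $\LL$, which give $\ell(\EE_{p/q})=2\,d(Q^{p/q}_{-2n-1},Q^{p/q}_0)=2\,d(Q^{p/q}_0,Q^{p/q}_{2n+1})$, this yields
\[ 0<\ell(g)=2\,d(Q^{p/q}_j,Q^{p/q}_0)<2\,d(Q^{p/q}_0,Q^{p/q}_{2n+1})=\ell(\EE_{p/q}). \]
But a primitive element of the free group $\GG$ is not a proper power, so $\EE_{p/q}$ generates the stabilizer of its axis in $\GG$; hence $g=\EE_{p/q}^{k}$ for some nonzero integer $k$, and $\ell(g)=|k|\,\ell(\EE_{p/q})\ge\ell(\EE_{p/q})$, a contradiction. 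The range $-2n-1<j<0$ is symmetric, and the case $p+q=2n$ is treated the same way after writing $\EE_{p/q}=\EE_{l/m}\EE_{r/s}$ as a product of palindromes, which again presents $\EE_{p/q}$ as a product of two half-turns about half-turn sides perpendicular to its axis.

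The step I expect to require the most care is the combinatorial input, not the group theory: namely, verifying that the crossing points $Q^{p/q}_r$ are \emph{strictly} ordered along $Ax_{\EE_{p/q}}$, so the distance inequality is strict — this rests on the fact, implicit in the construction of the $\F_r$, that $Ax_{\EE_{p/q}}$ leaves each $\F_r$ through a half-turn side distinct from the copy of $\LL$ it has just crossed, so consecutive $Q^{p/q}_r$ are distinct. The remaining points to check are routine: that $g$ indeed lies in $\GG$ (the coset count above), and that the axis of $g$ is \emph{exactly} $Ax_{\EE_{p/q}}$ rather than merely a geodesic meeting it — which is precisely where both orthogonality hypotheses are used, and which is also what guarantees $g\neq\mathrm{id}$.
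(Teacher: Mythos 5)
Your proof is correct and follows essentially the same route as the paper: from a hypothetical right angle you form the product $H_{\LL^{p/q}_j}H_{\LL}$, observe that it is a loxodromic element of $\GG$ sharing the axis of $\EE_{p/q}$ but with strictly shorter translation length, and contradict primitivity using discreteness of the axis stabilizer. The paper's own proof is terser --- it jumps directly to ``$(H_{L_j}H_{\LL})^m=\EE_{p/q}$ for some $m>1$'' --- so your additional bookkeeping (coset membership of the product, strict ordering of the points $Q^{p/q}_r$ along the axis) merely fills in details the paper leaves implicit.
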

 \begin{proof}  Suppose that for some $0< |j|  \leq 2n$ the angle between $L^{p/q}_{j}$ and $Ax_{\EE_{p/q}}$ were right.  Then the product $H_{L_j}H_{\LL}$ would be an element sharing fixed points with $\EE_{p/q}$.  Since the group is discrete,  this  would imply that there is an integer $m>1$ such that    $(H_{L_j}H_{\LL})^m=\EE_{p/q}$.  This cannot be since $\EE_{p/q}$ is primitive.
 \end{proof}

In \cite{Enumeration} we noted that if $P_1$ and $P_2$ were palindromes in the scheme, then the common perpendicular to $P_1P_2$ and $P_2P_1$ would intersect the common orthogonal $\LL$ to $P_1$ and $P_2$  and thus determine a point on it. We need an even more detailed analysis here.

That is we now suppose  $p+q=2n$ is even so that $\EE_{p/q}=\EE_{l/m}\EE_{r/s}$ where $\frac{p}{q}=\frac{l}{m}+\frac{r}{s}$ with $| ls-mr |=1$.  In this case a fundamental segment  of $Ax_{\EE_{p/q}}$ goes through an even number of copies of $\F$.

We showed in \cite{Enumeration} that  $\EE_{p/q}$ is a  palindrome in $A$ and $AB$ or $B$ and $AB$ so that its axis is perpendicular to conjugates of both of the half-turn sides $\LLA$ and $\LLB$  of $\F$  but it is not perpendicular to $\LL$.
 Now we also consider
 \[ (\tilde{\EE}_{p/q})^{-1}=H_{\LL}\EE_{p/q}H_{\LL} = H_{\LL} \EE_{l/m}\EE_{r/s}  H_{\LL}=( \EE_{l/m})^{-1}(\EE_{r/s}){^-1}=(\EE_{r/s}\EE_{l/m})^{-1} \]
which gives us the other product,  $\tilde{\EE}_{p/q}=\EE_{r/s}\EE_{l/m}$.   Note that this  is   a conjugate of $\EE_{p/q}$,
\[  \EE_{l/m}\EE_{r/s}= \EE_{r/s}^{-1}\EE_{r/s} \EE_{l/m}\EE_{r/s}. \]
In this case we  follow the axis of  $\EE_{l/m}\EE_{r/s}$ as it leaves $\H(\AA,\BB)$ and mark off the $n-1$ adjacent fundamental domains it passes through.   If it is not perpendicular to either of the sides through which it leaves $\F$,   we  go in both directions until we reach copies of $\F$ where the axis is perpendicular to the half-turn side it lands on.  Starting with the furthest copy on either side,  we label these $\F_{j}, j=1, \ldots, n$ and label conjugates of the half-turn side $\LL$ in each copy of $\F$ by $L^{p/q}_{2j}$ and the half-turn sides through which it leaves $\F_j$ by $L^{p/q}_{2j+1}$.   Note that $\F=\F_{j}$ for one of the $j$'s, say $j^*$.

Next we follow the axis of  $\EE_{r/s} \EE_{l/m}$ as it leaves $\overline{\H(\AA,\BB)}$ and mark off the  $n-1$ adjacent fundamental domains it passes through until we reach copies of $\F$  where the axis is perpendicular to the half-turn side it lands on and, starting with the furthest copy either side, label them  by $\F_{-j},  j=1, \ldots, n$;  we label conjugates of the half-turn side $\LL$ in each copy of $\F$ by $L^{p/q}_{-j}$ and the half-turn sides through which it leaves $\F_{-j}$ by $L^{p/q}_{-2j-1}$.  As above we label the intersection points of the axes and the half-turn lines respectively by $Q^{p/q}_j$, $j=-2n-1, \ldots,-1, 1, \ldots  2n+1$.   Note that by the symmetry under $H_{\LL}$ we have $\F_{j*}=\F_{-j*}$

Now each axis passes through $n-1$ disjoint fundamental domains and each has a segment in $\F=\F_{j*}=\F_{-j^*}$.    Again, if we were to use group elements to label the sides we would see that the union of the segments along the axis of $\EE_{p/q}$ and the conjugate of the union of the segments along $\tilde{\EE}_{p/q}$ by $\EE_{r/s}$  give us a fundamental segment for $\EE_{p/q}$.

Note that Proposition~\ref{nonright} holds for both these axes as well.
We also have

  \begin{prop} For $p+q$ even, the axes of  $\EE_{p/q}$ and $\tilde{\EE}_{p/q}$  intersect $\LL$ inside $\F$ in the same point.
 \end{prop}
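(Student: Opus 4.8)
\emph{Proof strategy.} The plan is to reduce the statement to Proposition~\ref{intersectL} by exploiting the half-turn symmetry about $\LL$. The first step is to read off, from the identity $(\tilde{\EE}_{p/q})^{-1}=H_{\LL}\EE_{p/q}H_{\LL}$ established just above, the geometric fact that
\[ Ax_{\tilde{\EE}_{p/q}}=Ax_{(\tilde{\EE}_{p/q})^{-1}}=H_{\LL}\big(Ax_{\EE_{p/q}}\big), \]
since conjugation by an isometry carries an axis to its image and an element shares its axis with its inverse. Thus the two axes in question are reflections of one another across the half-turn line $\LL$.

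The second step is to apply Proposition~\ref{intersectL} to $\EE_{p/q}$ itself --- legitimately, since that proposition precedes the even/odd split and its proof uses only Proposition~\ref{thru F} and Remark~\ref{samesign}, both in force here because $p/q>0$. This gives a point $Q\in\F$ at which $Ax_{\EE_{p/q}}$ meets $\LL$, and $Q$ is the unique common point of these two distinct geodesics (the endpoints of $Ax_{\EE_{p/q}}$ lie in the limit set, those of $\LL$ in the domain of discontinuity). I would then observe that $Q$ lies on the other axis as well: since $Q\in\LL$ and $\LL$ is the fixed-point set of $H_{\LL}$, we have $H_{\LL}(Q)=Q$, hence $Q=H_{\LL}(Q)\in H_{\LL}\big(Ax_{\EE_{p/q}}\big)=Ax_{\tilde{\EE}_{p/q}}$; and $Q\in\F$ still, as $\F$ is $H_{\LL}$-invariant. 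Since $Ax_{\tilde{\EE}_{p/q}}$ is likewise a geodesic distinct from $\LL$, it meets $\LL$ in a single point, which must therefore be $Q$. So both axes meet $\LL$ inside $\F$ in the common point $Q$.

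I do not expect a genuine obstacle: once the reflection relation $Ax_{\tilde{\EE}_{p/q}}=H_{\LL}(Ax_{\EE_{p/q}})$ is in hand, the rest is bookkeeping. The one point to watch is that $\tilde{\EE}_{p/q}=\EE_{r/s}\EE_{l/m}$ is only a \emph{conjugate} of $\EE_{p/q}$ and is not itself one of the enumerated words $\EE_{p/q}$, so Proposition~\ref{intersectL} cannot be quoted for it directly; its conclusion is transported to $\tilde{\EE}_{p/q}$ through $H_{\LL}$, and this transport is exactly what yields the ``same point'' assertion. Alternatively one could rerun the argument of Proposition~\ref{intersectL} verbatim for $\tilde{\EE}_{p/q}$, but one would still have to appeal to the $H_{\LL}$-symmetry to conclude that the two crossing points coincide.
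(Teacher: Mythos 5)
Your proposal is correct and follows essentially the same route as the paper: the paper's proof also observes that $H_{\LL}$ is a reflection fixing $\LL$ pointwise, so the reflected axis (which is $Ax_{\tilde{\EE}_{p/q}}$) must pass through the same intersection point of $Ax_{\EE_{p/q}}$ with $\LL$. Your write-up just makes explicit the appeal to Proposition~\ref{intersectL} and the identification $Ax_{\tilde{\EE}_{p/q}}=H_{\LL}(Ax_{\EE_{p/q}})$, which the paper leaves implicit (the paper additionally remarks that the intersection is not orthogonal, but that is not part of the stated claim).
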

 \begin{proof} A half-turn about a geodesic in the plane $\HH^2$ is a reflection in the geodesic.  It follows that  since $Ax_{\EE_{p/q}}$ intersects $\LL$, its reflection also does and since the points of $\LL$ are fixed, the intersection with $\LL$ is a fixed point of the reflection.  Because these axes are not orthogonal to $\LL$ the intersection is not orthogonal.
 \end{proof}

This proposition implies that the two points we counted in $\F$ where the axes pass through $\LL$ are actually the same point.
\subsection{  Definition of ESI's.  }

We have shown that corresponding to any primitive $\EE_{p/q}$  of   the palindromic enumeration there is a unique set of $4n+3$ half-turn geodesics.
An ESI is a pair $( L_j^{p/q},Ax_{\EE_{p/q}})$. This  intersection of the axis of $\EE_{p/q}$ and $L^{p/q}_j$ determines a point in $\HH^2$. If  $\XX$ is any primitive element of the group, then  $\XX$ is conjugate to $\EE_{p/q}$ for some $p/q$. Let $\XX = \WW \EE_{p/q} \WW^{-1}$ for  $\WW \in \GG$.  Then in the quotient, the image of the point determined by the pair  $(\WW(L_j^{p/q}), Ax_{\XX})$ is the same as the image of the point determined by the pair $( L_j^{p/q},Ax_{\EE_{p/q}})$.

We say $P \in \HH^2$ is equivalent to $Q^{p/q}_j$ for some $p/q$ and $j$ if there is $\WW \in \GG$ such that $P=\WW(Q_j^{p/q})$.
We denote the equivalence class by square brackets:
\[  [Q_j^{p/q}]= \{\WW(Q_j^{p/q}) \}_{\WW \in \GG}.  \]

  \begin{defn}\label{esi1}    Let
  \[[ESI_{p/q}] = {\bigsqcup}_{j=-2n+1}^{2n-1}  [Q_j^{p/q}] \]
  where $n=[p+q-1]/2$ and $ {\bigsqcup}_{j=2n-1}^{2n-1}$ means that if  $(p+q)$ is odd we omit the term $j=0$ and if
  $p+q$ is even we omit the term $j=j*$.
   Thus we have removed those $j$ such the angle between $Ax_{\EE_{p/q}}$ and $L^{p/q}_j$ at $Q^{p/q}_j$ is right.  These
 are the {\em Essential Intersection points or ESI set} of $\EE_{p/q}$.  The full {\em ESI set of $\GG$} is
 \[ ESI(\GG) = \bigcup_{p/q}  [ESI_{p/q}].   \]\end{defn}

 The following theorem was proved in \cite{GKwords} using recursion and again in \cite{Vidur} where the proof was non-recursive.

 \begin{thm}\label{thm1}  For any $p/q >0$, the number of ESI points of $\EE_{p/q}$ is $2(p+q-1)$.  The number of points counted on the quotient surfaces is $p+q-1$.  \end{thm}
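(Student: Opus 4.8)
The plan is to count directly the points where a fundamental segment $\sigma$ of $Ax_{\EE_{p/q}}$ crosses the $\GG$-translates of the three half-turn lines, to discard the finitely many right-angled crossings, and then to show that in $S=\HH^2/\GG$ the surviving crossings fall into pairs under the reflection symmetry carried by $\tilde\GG$; I would handle the cases $p+q$ odd and $p+q$ even separately.

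Assume first $p+q=2n+1$, so $\EE_{p/q}$ is a palindrome and $\sigma$ is symmetric about $\LL$. By Proposition~\ref{thru F}, $\sigma$ crosses $\partial\F$ only in half-turn sides, and by Proposition~\ref{intersectL} it crosses $\LL$ exactly once, inside $\F$; propagating this through the $2n+1$ copies $\F_{-n},\dots,\F_n$ that $\sigma$ traverses, $\sigma$ crosses exactly the $4n+3$ half-turn lines $\LL^{p/q}_j$, $-2n-1\le j\le 2n+1$, at the points $Q^{p/q}_j$, the two ends being related by $\EE_{p/q}(\LL^{p/q}_{-2n-1})=\LL^{p/q}_{2n+1}$ and $\EE_{p/q}(Q^{p/q}_{-2n-1})=Q^{p/q}_{2n+1}$. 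Thus, after identifying the ends, the closed geodesic $\gamma\subset S$ covered by $\sigma$ meets the seams in $4n+2$ points. As recalled above, $\EE_{p/q}=H_{\LL^{p/q}_{-2n-1}}H_\LL$, so $Ax_{\EE_{p/q}}$ is orthogonal to $\LL$ at $Q^{p/q}_0$ and to $\LL^{p/q}_{\pm(2n+1)}$ at the two ends, while by Proposition~\ref{nonright} no other $Q^{p/q}_j$ is a right angle. Removing the two right-angled points (the one on $\LL$ and the identified end-pair) leaves $4n=2(p+q-1)$ crossings; these are the ESI points of $\EE_{p/q}$.

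For the count on the surface, let $\theta\colon S\to S$ be the isometric involution induced by $H_\LL\in\tilde\GG$; since $[\tilde\GG:\GG]=2$ the half-turn about any of the lines $\LL^{p/q}_j$ also induces $\theta$, and the fixed set of $\theta$ is the union of the seams. In the present (palindromic) case $Ax_{\EE_{p/q}}\perp\LL$, hence $H_\LL$ preserves $Ax_{\EE_{p/q}}$; in the even case treated below one has instead $H_\LL\EE_{p/q}H_\LL=(\tilde{\EE}_{p/q})^{-1}$ with $\tilde{\EE}_{p/q}=\EE_{r/s}\EE_{p/q}\EE_{r/s}^{-1}$, so $H_\LL$ sends $Ax_{\EE_{p/q}}$ to a $\GG$-translate of itself. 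Either way $\theta(\gamma)=\gamma$, so $\theta$ permutes the lifts of $\gamma$. Now fix a non-right crossing $Q^{p/q}_j\in\LL^{p/q}_j$. The half-turn $H_{\LL^{p/q}_j}$ fixes $Q^{p/q}_j$, reflects $Ax_{\EE_{p/q}}$ across $\LL^{p/q}_j$, and projects to $\theta$; hence $H_{\LL^{p/q}_j}(Ax_{\EE_{p/q}})$ is again a lift of $\gamma$ through $Q^{p/q}_j$, distinct from $Ax_{\EE_{p/q}}$ because the crossing is not orthogonal. Therefore the image of $Q^{p/q}_j$ in $S$ is a transverse self-intersection of $\gamma$ lying on a seam -- an ESI -- and its other branch is another non-right crossing $Q^{p/q}_{j'}$ with $j'\neq j$; conversely every ESI of $\gamma$ arises this way. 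This matching is a fixed-point-free involution on the $2(p+q-1)$ ESI points of $\EE_{p/q}$, so, granting that the primitive geodesic $\gamma$ has only transverse double points, the quotient -- the ESI set of $\gamma$ in $S$ -- has exactly $p+q-1$ elements.

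For $p+q=2n$ one runs the same analysis along the fundamental segment assembled from the relevant arc of $Ax_{\EE_{p/q}}$ and the $\EE_{r/s}$-image of the relevant arc of $Ax_{\tilde{\EE}_{p/q}}$, using the final proposition above (these two axes meet $\LL$ at the same interior point of $\F$, so the $\LL$-crossing is counted once). Here, as recalled above, $Ax_{\EE_{p/q}}$ is orthogonal to a conjugate of $\LLA$ and to a conjugate of $\LLB$ but not to $\LL$; discarding these two right-angled crossings and identifying the ends again leaves $2(p+q-1)$ ESI points, and the same $\theta$-pairing reduces this to $p+q-1$. The step I expect to require the most care is the bookkeeping in the count: verifying that a fundamental segment crosses exactly the stated family of half-turn lines with no coincidences beyond the forced one at its ends, and pinpointing the right-angled crossings -- straightforward in the odd case from the half-turn decomposition of $\EE_{p/q}$, but genuinely delicate in the even case, where the fundamental segment has to be glued from two arcs. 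A second point that must be nailed down is that $\gamma$ has only transverse double self-intersections, so that the matching above really has a quotient of size $p+q-1$.
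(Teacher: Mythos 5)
This paper does not actually prove Theorem~\ref{thm1}: it states the result and refers to \cite{GKwords} for a recursive proof and to \cite{Vidur} for a non-recursive one. So there is no in-paper argument to compare yours against; your direct count is a third route, built entirely on the labelling machinery of Section~3, and for the most part it is correct and essentially forced by that machinery. For $p+q=2n+1$ the fundamental segment meets the $4n+3$ lines $\LL^{p/q}_j$, the two ends are identified by $\EE_{p/q}$, the factorization $\EE_{p/q}=H_{\LL^{p/q}_{-2n-1}}H_\LL$ puts right angles exactly at $j=0$ and $j=\pm(2n+1)$, and Proposition~\ref{nonright} excludes any others, giving $4n+2-2=4n=2(p+q-1)$. (Your total of $4n$ is the one that matches the theorem; note that the literal index range in Definition~\ref{esi1} would give $4n-2$ and appears to be off by one, so your bookkeeping is the trustworthy version.)

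The genuine soft spot is the passage to the quotient, and you have flagged it yourself. To get $p+q-1$ downstairs you need the $4n$ surviving points to fall into $\GG$-orbits of size exactly two within one fundamental segment. Your branch-matching argument correctly shows each $Q^{p/q}_j$ projects to a self-intersection on a seam, but the claim that the resulting matching is a fixed-point-free involution on the index set is exactly where the content lies. In the palindromic case you can make the pairing explicit: $\WW_j:=H_{\LL^{p/q}_j}\circ H_{\LL}$ lies in $\GG$ (both factors are in $\tilde\GG\setminus\GG$) and carries $Q^{p/q}_{-j}$ to $Q^{p/q}_j$, so $Q^{p/q}_j\sim_\GG Q^{p/q}_{-j}$ for every $j$, which pairs the $4n$ points into $2n$ candidate classes. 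What still must be excluded is any further coincidence --- three or more of the $Q^{p/q}_j$ in a single orbit, i.e.\ a multiple point of $\gamma_{p/q}$ on a seam --- and that exclusion is precisely the combinatorial work that \cite{GKwords} does by recursion and \cite{Vidur} by cutting sequences; it cannot be waved through by "granting that $\gamma$ has only transverse double points." The same caveat applies, with heavier bookkeeping, to your even case, where the assembly of the fundamental segment from arcs of $Ax_{\EE_{p/q}}$ and $Ax_{\tilde\EE_{p/q}}$ is correctly sketched but not verified. So: right approach and a correct count in $\HH^2$, but the two-to-one property of the projection is the one step you would have to prove (or cite) to make this a complete proof.
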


The points $Q^{p/q}_j$ are uniquely defined by the pair of geodesics $(L^{p/q}_j,Ax_{\EE_{p/q}})$.   Thus if $Q^{p/q}_j$ is equivalent to $\WW(Q_j^{p/q})$, for some $\WW \in \GG$, then $\WW(Q_j^{p/q})$ is uniquely defined by the pair of geodesics $(\WW(L^{p/q}_j),\WW(Ax_{\EE_{p/q}}))$.

 This gives us an equivalence relation and  we again denote the equivalence class by square brackets  $[( L^{p/q}_j,Ax_{ \EE_{p/q}})]$.

Another characterization of the ESI set of $\GG$ is then given by
 \begin{defn}\label{esi2}    The set of pairs, ${\bigsqcup}_{j=-2n+1}^{2n-1}[( L^{p/q}_j,Ax_{ \EE_{p/q}})]$ , where $n=[p+q-1]/2$  and $\bigsqcup$ has the same meaning as in Definition~\ref{esi1}
 are the {\em Essential Intersection points or ESI set} of $\EE_{p/q}$.  The full set is
   \[ ESI(\GG) = \bigcup_{p/q} {\bigsqcup}_{j=-2n+1}^{2n-1}[( L^{p/q}_j,Ax_{ \EE_{p/q}})] \]
  \end{defn}

The equivalence of these definitions is clear since the points $Q^{p/q}_j$ are just the intersection points of the pairs of geodesics.   The advantage of the second definition, as we will see, is that we can generalize it to other groups besides the model group.

\subsection{Loops and Winding in $\HH^2$}
Under the projection $\pi:\HH^2 \rightarrow \HH^2/\GG=S$,   the axes $Ax_{\AA}$, $Ax_{\BB}$ and $Ax_{\AA^{-1}\BB}$ project to three simple geodesics $\gamma_{0}=\pi(Ax_{\AA}), \gamma_{\infty}=\pi(Ax_\BB)$ and $\gamma_{-1}=\pi(Ax_{\AA^{-1}\BB})$ which we now label by their corresponding rational.   These are the only simple geodesics on $S$.   The half-turn lines $\LL$, $\LLA$ and $\LLB$ project to bi-infinite geodesics $\pi(L)=\ell, \pi(\LLA)=\ell_{\AA}$ and $\pi(\LLB)=\ell_{\BB}$ such that $\ell$ intersects $\gamma_0$ and $\gamma_{\infty}$ orthogonally and is disjoint from $\gamma_{-1}$,  $\ell_{\AA}$ intersects $\gamma_0$ and $\gamma_{-1}$ orthogonally and is disjoint from $\gamma_{\infty}$ and $\ell_{\BB}$ intersects $\gamma_{\infty}$ and $\gamma_{-1}$ orthogonally and is disjoint from $\gamma_{0}$.   The half-turn $H_{\LL}$ on $\HH^2$ induces an involution $\INV$ of $S$.  The geodesics $\{\ell,\ell_\AA,\ell_\BB\}$ are the seams of $S$.

 Thus we can write $\gamma_0=\sigma_0 \cup \INV(\sigma_0)$  where $\sigma_0$ is the segment   from $\ell$ to $\ell_{\AA}$ to see  that it is made up of  two segments that form a loop.  Similarly  $\gamma_{\infty}= \sigma_{\infty} \cup \INV(\sigma_{\infty})$  is another loop.   Each of these loops intersects exactly two of the seams once.

Write  $\gamma_{p/q}=\pi(Ax_{E_{p/q}})$.

\begin{lemma}\label{loops} For every $p/q$, $\gamma_{p/q}$ can be written as a union of $p+q$ loops  each intersecting $\ell$ and either $\ell_\AA$ or $\ell_\BB$.
\end{lemma}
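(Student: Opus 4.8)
The plan is to use the cutting-sequence data already assembled for $Ax_{\EE_{p/q}}$ together with the symmetric-decomposition picture of the fundamental domain $\F = \H(\AA,\BB)\cup\overline{\H(\AA,\BB)}$, and to push the loop decomposition of $\gamma_0$ and $\gamma_\infty$ given just above (each $\gamma\in\{\gamma_0,\gamma_\infty\}$ is $\sigma\cup\iota(\sigma)$ with $\sigma$ a segment joining $\ell$ to $\ell_\AA$ or $\ell_\BB$) down to the general primitive geodesic. First I would recall from Proposition~\ref{thru F} and Proposition~\ref{intersectL} that a fundamental segment of $Ax_{\EE_{p/q}}$ meets only half-turn sides of the copies of $\F$ it traverses, and meets a conjugate of $\LL$ inside each copy $\F_j$. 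So along a fundamental segment we get the ordered list of intersection points $Q^{p/q}_j$, $j=-2n-1,\dots,2n+1$, alternating between "hits a conjugate of $\LL$" (even $j$, the point $Q^{p/q}_{2j}$ on $\LL^{p/q}_{2j}$) and "hits a conjugate of $\LLA$ or $\LLB$" (odd $j$, the point on $L^{p/q}_{2j+1}$).

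The core step is to identify each loop of $\gamma_{p/q}$ with the projection of the sub-arc of $Ax_{\EE_{p/q}}$ running between consecutive hits of a conjugate of $\LL$, i.e. from $Q^{p/q}_{2j}$ to $Q^{p/q}_{2j+2}$: such an arc crosses exactly one conjugate of $\LLA$ or $\LLB$ in between (the point on $L^{p/q}_{2j+1}$) and lies in one copy $\F_j$. Since $\pi$ sends every conjugate of $\LL$ to the seam $\ell$ and every conjugate of $\LLA$ (resp.\ $\LLB$) to the seam $\ell_\AA$ (resp.\ $\ell_\BB$), the projection of this arc is a path that leaves $\ell$, crosses $\ell_\AA$ or $\ell_\BB$ once, and returns to $\ell$ — and by the half-turn symmetry $H_\LL$ of $\F$ (which induces $\iota$) it is precisely of the form $\sigma\cup\iota(\sigma)$, hence a loop of the claimed type, exactly as for $\gamma_0,\gamma_\infty$. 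I would then count: a fundamental segment for $\EE_{p/q}$ has endpoints at $Q^{p/q}_{-2n-1}$ and $Q^{p/q}_{2n+1}$, identified by $\EE_{p/q}$, and between them it crosses conjugates of $\LL$ at $j=-2n,-2n+2,\dots,2n$ together with the endpoint copies; the number of "between consecutive $\LL$-crossings" arcs obtained by cutting the closed geodesic $\gamma_{p/q}$ at its crossings with $\ell$ is the number of times $\gamma_{p/q}$ meets $\ell$, which is the number of distinct conjugates of $\LL$ met by the fundamental segment. For $p+q=2n+1$ odd this count is $n+1$ on each side of the symmetry plus adjustments, which I would tally to $p+q$; for $p+q=2n$ even the same bookkeeping on the two pieces (the axes of $\EE_{p/q}$ and $\tilde\EE_{p/q}=\EE_{r/s}\EE_{l/m}$, which meet $\LL$ in the same point inside $\F$) again gives $p+q$. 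Concretely I expect to match this against Theorem~\ref{thm1}: there the number of ESI points on the quotient is $p+q-1$, and the ESI points are exactly the crossings with the seams $\ell_\AA\cup\ell_\BB$ (the odd-$j$ points, minus the one right-angle crossing that is discarded), so the number of seam-$\ell$ crossings, which equals the number of loops, comes out to $(p+q-1)+1=p+q$.

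The main obstacle, and the step I would spend the most care on, is the combinatorics of where the "loop cut points" fall relative to the endpoints of the fundamental segment, and making sure no loop is double-counted when the fundamental segment is closed up by the identification $\EE_{p/q}(Q^{p/q}_{-2n-1})=Q^{p/q}_{2n+1}$. In particular one must check that the crossing with $\LL$ at the two identified endpoints is a single crossing of $\ell$ on the quotient (not two), and in the even case that the two "seam-$\ell$" hits coming from $\F_{j^*}=\F_{-j^*}$ are genuinely the same point of $\ell$ — which is exactly the content of the proposition stating $Ax_{\EE_{p/q}}$ and $Ax_{\tilde\EE_{p/q}}$ meet $\LL$ in the same point inside $\F$ — so that the even-case count is not off by one. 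Once that identification bookkeeping is pinned down, the loop-type assertion (each loop meets $\ell$ and exactly one of $\ell_\AA,\ell_\BB$) is immediate from the alternation of even/odd intersection points along the axis together with Remark~\ref{samesign}, which forbids an $\LLA$-side immediately followed by an $\LLB$-side, and the count follows by the comparison with Theorem~\ref{thm1}.
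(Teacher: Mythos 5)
There is a genuine gap in the core step. You propose to realize each loop as the sub-arc of $\gamma_{p/q}$ running between two \emph{consecutive} crossings of $\ell$, i.e.\ the projection of the arc of $Ax_{\EE_{p/q}}$ from $Q^{p/q}_{2j}$ to $Q^{p/q}_{2j+2}$, and you assert that this arc is of the form $\sigma\cup\INV(\sigma)$ and hence closed. Neither assertion holds. The only identifications among the projected points are $\pi(Q^{p/q}_k)=\pi(Q^{p/q}_{-k})$ (this is exactly what makes the quotient count in Theorem~\ref{thm1} come out to $p+q-1$ rather than $2(p+q-1)$); in particular $\pi(Q^{p/q}_{2j})$ and $\pi(Q^{p/q}_{2j+2})$ are \emph{distinct} points of $\ell$, so your arc is an open path with two different endpoints on $\ell$, not a closed loop. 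Correspondingly, writing $\sigma_j$ for the projection of the arc from $Q^{p/q}_j$ to $Q^{p/q}_{j+1}$, one has $\INV(\sigma_{2j})=\sigma_{-2j-1}$, which is never the adjacent segment $\sigma_{2j+1}$; so the second half of your arc is not the $\INV$-image of the first half. The paper's construction is different: take a \emph{single} segment $\sigma_j$ (from a point of $\ell$ to a point of $\ell_\AA$ or $\ell_\BB$) and close it up with $\INV(\sigma_j)$, which is a different, generally non-adjacent arc of $\gamma_{p/q}$ having the \emph{same} two endpoints, because $\INV$ fixes the seams pointwise and $\gamma_{p/q}$ is $\INV$-invariant. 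The $2(p+q)$ segments along a fundamental period then pair off under $j\leftrightarrow -j-1$ into $p+q$ genuinely closed loops, each meeting $\ell$ once and one of $\ell_\AA,\ell_\BB$ once; the loops are unions of non-consecutive arcs of $\gamma_{p/q}$, not a concatenation of $\gamma_{p/q}$ into consecutive pieces.

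A secondary error: your comparison with Theorem~\ref{thm1} assumes the ESI points on the quotient are exactly the crossings with $\ell_\AA\cup\ell_\BB$ (the odd-$j$ points). In fact the ESI set includes the even-$j$ points on $\ell$ as well (only $j=0$ and the perpendicular endpoints are excluded), so the arithmetic $(p+q-1)+1=p+q$ does not count what you want it to count. The correct count needs no appeal to Theorem~\ref{thm1}: there are $2(p+q)$ segments $\sigma_j$ in a fundamental period, and the involution pairs them two to a loop.
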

\begin{proof}     Consider the segment  of $Ax_{\EE_{p/q}}$ from $Q_j^{p/q}$ to $Q^{p/q}_{j+1}$.   One of its endpoints lies inside a fundamental domain $\F_j$ and the other on its boundary.   Denote   the projection of the segment   by $\sigma^{p/q}_j$;   it is simple and  one of its endpoints  is on $\ell$ and the other is on either $\ell_\AA$ or $\ell_\BB$.  Suppose for argument's sake that the other end is $\ell_\AA$.   The union $\sigma_j(p/q) \cup \INV(\sigma_j^{p/q})$ is a closed loop on $S$.  Since it contains one segment going from $\ell$ to $\ell_A$ and another from $\ell_A$ to $\ell$, it is homotopic to $\gamma_0$ on $S$.

Since we can do this for every adjacent pair of points, we obtain $p+q$ loops, each homotopic to either $\gamma_0$ or $\gamma_{\infty}$.
\end{proof}

 \begin{remark}  We interpret this decomposition of the geodesic $\gamma_{p/q}$ into loops as showing that it {\em winds}  a total of  $p$ times around $\gamma_{0}$ and   a total of $q$ times around $\gamma_{\infty}$.  Note however, that the order in which it goes around each of  the loops $\gamma_{0}$ and $\gamma_{\infty}$ is determined by the form of the primitive word $\EE_{p/q}$.    The integer entries in the continued fraction expansion for $p/q$  determine the number of consecutive times it winds about each curve before it winds around the other curve. There are words in $\GG$ in which $\AA$ appears $p$ times and $\BB$ appears $q$ times that are not primitive.   Our discussion does not apply to them.
 \end{remark}
\begin{remark}  Note that we obtain this geometric interpretation only because we started with stopping generators.  \end{remark}

\section{$\HH^3$: Generalized ESI's}

\subsection{Transversals in the quotient }

  The ESI points for our model group $\GG$ have a  geometric interpretation on the quotient surface.   Definition~\ref{esi2}, however, makes sense for an arbitrary free discrete group $G=\langle A,B \rangle \subset \Isom(\HH^3)$ although the geometric interpretation is not clear because, generically, the pairs of lines do not intersect.

  In the quotient manifold $M=\HH^3$ we can look at the projections $\gamma_{p/q}$ of of the axes of the words $E_{p/q}$ and the projections  of the half-turn lines $L, L_A, L_B$.  Adapting the notation in \cite{GKhyp,GKwords} we call them $\ell, \ell_A, \ell_B$.
   In \cite{GKhyp} we proved that $\ell, \ell_A, \ell_B$ are three bi-infinite geodesics in $M$ and that the half-turn about any one of them projects to an involution $\INV$ of $M$ whose fixed set is precisely these three geodesics.
  We also proved that for any primitive element $E_{p/q}$   $G$,   $\gamma_{p/q}$ is invariant (but not pointwise invariant) under the involution $\INV$.

  Suppose for some $\xi$ there is a geodesic $\omega$ in $M$ joining $\xi$ to  $\INV(\xi) \neq \xi$ and suppose $\omega$ meets $\gamma_{p/q}$ orthogonally at these points.  We call such an $\omega$ a {\em transversal} since $\omega$ must also intersect one of $\ell, \ell_A, \ell_B$ orthogonally.   For the sake of discussion, assume it is $\ell$.
  Then we can lift $\gamma_{p/q}$ to $Ax_{E_{p/q}}$ and find a lift of $\ell$, $W(L)$ for some $W \in G$ such that $\omega$ lifts to the common orthogonal $O$ to  $Ax_{E_{p/q}}$ and $W(L)$.

  Clearly $W$  is not uniquely determined and $O$ depends on the choice of $W$.   If, however, $W(L)=L^{p/q}_j$ for some $j$, then $O$ can be interpreted as a generalization of the intersection point.   In general, the number of transversals  we can find for a given $\gamma_{p/q}$ is unrelated to $p/q$.  Since our goal is to relate the generalized intersection points to $p/q$, at least for certain groups, we proceed below to define our generalization from the labelling of the model group transferred to the abstract group via the natural isomorphism rather than the geometry of $M$.

 \subsection{  Generalized ESI's for a group $G \in \HH^3$}

 We now consider the group $G=\langle A,B \rangle$  as an arbitrary free purely loxodromic discrete group in $\Isom(\HH^3)$ and let and its degree two extension be  $\tilde{G}=\langle H_L, H_{L_A},H_{L_B} \rangle$.  As such, we can talk about axes and half-turn lines.  We can transfer the labelling we have for the model group to the group $G$:  words $E_{p/q}$ in $G$ are just the words $\EE_{p/q}$ with $\AA$ replaced by $A$ and $\BB$ replaced by $B$.   Since each half-turn line $L^{p/q}_j$ for $\GG$ can be written as $\WW(\LL_{\XX_j})$ with $\LL_{\XX_{j}} \in \{ \LL, \LLA,\LLB\}$,  we label a subset of  half-turn lines for $G$ again by   $L^{p/q}_j=W(L_{X_j})$,    where  $X_j$ corresponds to $\XX_j$ under the isomorphism.

The equivalence relation between pairs also transfers to $G$ in the obvious way:  $(L^{p/q}_j,Ax_{E_{p/q}})$ is equivalent to $(L_X,Ax_{Y})$ for some half-turn line $L_X$ of $\tilde{G}$ and some primitive element of $Y \in G$ if there is a  $W \in G$ such that $W(L^{p/q}_j)=L_X$ and  $W(Ax_{(E_{p/q})}) =Ax_{E_{p/q}}$.  Again using square brackets for equivalence classes, setting $n-[p+q-1]/2$ and defining $\bigsqcup$ as above we have

\begin{defn}\label{combESI} Given $G=\langle A,B \rangle \subset \Isom(\HH^3)$.  The {\em  Generalized ESI set} for $G$ is defined as the collection of equivalence classes of pairs of geodesics:
\[ SESI(G)=\bigcup_{p/q \in \QQ} \{[ESI_{E_{p/q}} ] \} =  {\bigsqcup}_{j=-2n-1}^{2n+1}[(L_j(p/q),Ax_{E_{p/q}})] \]
where the labeling and notation is taken from the model group $\GG=\langle \AA,\BB \rangle$ with stopping generators $\AA,\BB$.
\end{defn}

 \begin{thm} The set $SESI(G)$ is well-defined for any free, purely loxodromic discrete group $G=\langle A,B \rangle \subset \Isom(\HH^3).$
 \end{thm}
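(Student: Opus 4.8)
The plan is to reduce the statement to the combinatorics already fixed once and for all in the model group, together with a single algebraic isomorphism, so that the hypotheses on $G$ enter only through two elementary non-degeneracy facts. First I would record that, since $G$ is non-elementary, $A$ and $B$ are non-trivial and $G$ is free of rank two; as a two-element generating set of a free group of rank two is a basis, $\AA\mapsto A$, $\BB\mapsto B$ extends to an isomorphism $\phi\colon\GG\to G$. Both $\tilde\GG$ and $\tilde G$ are generated by three involutions and contain a free subgroup of rank two of index two, so (Hopfianity of finitely generated free groups: a surjection of a free group of rank two onto a free group of rank two is an isomorphism) each is isomorphic to $\ZZ/2\ast\ZZ/2\ast\ZZ/2$, and the correspondence $H_\LL\mapsto H_L$, $H_{\LLA}\mapsto H_{L_A}$, $H_{\LLB}\mapsto H_{L_B}$ extends to an isomorphism $\tilde\phi$ restricting to $\phi$ on $\GG$. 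This is the natural isomorphism invoked in Definition~\ref{combESI}; for the bare well-definedness claim only $\phi$ and the matching $\LL\leftrightarrow L$, $\LLA\leftrightarrow L_A$, $\LLB\leftrightarrow L_B$ are actually used.

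Next I would verify that every geometric object in Definition~\ref{combESI} is genuinely and uniquely defined in $\Isom(\HH^3)$. Since $G$ is purely loxodromic, $A$ and $B$ are loxodromic, so $Ax_A$ and $Ax_B$ are honest geodesics; for a discrete non-elementary group two loxodromic axes can neither coincide nor be asymptotic, so $Ax_A$ and $Ax_B$ have a unique common perpendicular $L$, and then $A=H_L\circ H_{L_A}$, $B=H_L\circ H_{L_B}$ determine $L_A$ and $L_B$ uniquely, exactly as recalled in Section~\ref{background}. For each rational $p/q$ the element $E_{p/q}:=\phi(\EE_{p/q})$ is primitive, hence non-trivial, hence loxodromic, so $Ax_{E_{p/q}}$ is a well-defined geodesic. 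For each admissible $j$ the model supplies a \emph{unique} word $\WW^{p/q}_j\in\GG$ and a type $\YY_j\in\{\LL,\LLA,\LLB\}$ with $\LL^{p/q}_j=\WW^{p/q}_j(\YY_j)$ (this uniqueness is the fact recorded in Section~3), and we set $L^{p/q}_j:=\phi(\WW^{p/q}_j)(Y_j)$, where $Y_j\in\{L,L_A,L_B\}$ matches $\YY_j$; this is a well-defined geodesic of $\tilde G$. I would stress here that we do not re-run the fundamental-domain construction inside $G$: the words $\WW^{p/q}_j$, the integer $n=(p+q-1)/2$, and (when $p+q$ is even) the distinguished index $j^{*}$ are purely arithmetic--combinatorial data attached to $p/q$ in the model, so the index range over which $\bigsqcup$ runs and the single term it omits transfer verbatim and never ask the $\HH^3$ geometry of $G$ to cooperate.

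Finally I would check that $SESI(G)$ is literally a set of equivalence classes. On the set of ordered pairs (half-turn line of $\tilde G$, axis of a primitive element of $G$), the relation $(L^{p/q}_j,Ax_{E_{p/q}})\sim(L_X,Ax_Y)$ meaning ``$W(L^{p/q}_j)=L_X$ and $W(Ax_{E_{p/q}})=Ax_Y$ for some $W\in G$'' is reflexive ($W=\mathrm{id}$), symmetric ($W\mapsto W^{-1}$) and transitive (composition), hence an equivalence relation; thus each $[(L^{p/q}_j,Ax_{E_{p/q}})]$ is a well-defined class, and $SESI(G)$ is the well-defined union over $p/q\in\QQ$ of the finite families of such classes indexed by the admissible $j$. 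Since nothing beyond the facts assembled above is used, $SESI(G)$ depends only on $(G;A,B)$ together with the once-and-for-all choice of model $\GG$.

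The only step with genuine content is the marshaling of hypotheses that underlies the previous two paragraphs: one must see that ``non-elementary'' and ``discrete'' are precisely what force $Ax_A$ and $Ax_B$ to be disjoint geodesics with a unique common perpendicular (so that $L,L_A,L_B$ exist), that ``purely loxodromic'' is precisely what makes every $Ax_{E_{p/q}}$ an honest geodesic, and that the uniqueness of the words $\WW^{p/q}_j$ in the model --- already established in Section~3 --- is precisely what makes their transfer under $\phi$ unambiguous. Once those points are in place, the well-definedness of $SESI(G)$ is a formality.
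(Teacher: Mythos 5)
Your proof is correct and rests on the same essential point as the paper's: the hypotheses (free and purely loxodromic for the axes, discrete and non-elementary for the half-turn lines and common perpendiculars) are exactly what guarantee that every geodesic appearing in Definition~\ref{combESI} exists and is uniquely determined, so that the classes of pairs are genuine objects. The paper's own proof is a terse two-sentence version of this; your additional scaffolding (the explicit isomorphism $\phi$, the uniqueness of the labelling words $\WW^{p/q}_j$, and the verification that $\sim$ is an equivalence relation) is consistent with, and fills in details implicit in, the paper's argument.
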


 \begin{proof}  The set $SESI(G)$ is defined by pairs of geodesics.  These are defined by their endpoints in $\partial{\HH^3}$.  For the set to be well-defined, these endpoints need to be distinct.   This will be true of the axes if the group is free and all its elements are loxodromic.  The half-turn lines will have distinct endpoints if the group is discrete.
 \end{proof}

  \subsection{The  Connectors}

Let  $(L_j^{p/q},Ax_{E_{p/q}})$ be a representative pair   for an equivalence class in $SESI(G)$.   In general the two geodesics in a pair will not intersect so   we consider their common orthogonal,   $O_j^{p/q}$.    Its endpoints can be computed using cross ratios from the endpoints of the geodesics of the pair.
      Thus, if the  pair   $(L_j^{p/q},Ax_{E_{p/q}})$ does intersect,  the $O_j^{p/q}$ determined by the cross ratios is  the common orthogonal through the intersection point to the plane they span.  Thus for the model group $\GG$, this orthogonal $O_j^{p/q}$ is a geodesic perpendicular to the plane left invariant by $\GG$ and intersecting it at the point $Q^{p/q}_j$;
   it is therefore the  appropriate generalization of the intersection point.

  Continuing our use of bar notation, we set $\overline{E}^j_{p/q}=H_{L_j^{p/q}}E_{p/q}H_{L_j^{p/q}}$.  Then since $G$ has index two in $\tilde{G}$,
  \[   (L_j^{p/q},Ax_{\overline{E}^j_{p/q}}) \in [(L_j^{p/q},Ax_{E_{p/q}})].  \]
  Since  $O_j^{p/q}$ is perpendicular to $L_j^{p/q}$,   $H_{L_j^{p/q}}(O_j^{p/q})=O_j^{p/q}$.  That is $O_j^{p/q}$ is also  the common orthogonal  for the pair $(L_j^{p/q},Ax_{\overline{E}^j_{p/q}})$.   This leads us to define

  \begin{defn}\label{connector} For each representative pair of geodesics $(L_j^{p/q},Ax_{E_{p/q}})$ in $SESI(G)$, the common orthogonal $O_j^{p/q}$ to the pair is a geodesic called a {\em  connector}.   Note that the connectors depend on the presentation of $G$ and the labelling induced by the isomorphism to the model group.
  \end{defn}

  The axes of $E_{p/q}$ and $\overline{E}^j_{p/q}$ project to the same geodesic $\gamma_{p/q} $ in $M$ and the connector $O_j^{p/q}$ projects to a geodesic  $\omega_j^{p/q}$ joining two points on   $\gamma_{p/q}$ and meeting it orthogonally at both these points.   The geodesic $\omega_j^{p/q}$ also intersects the projection of the half-turn line $L_j^{p/q}$ orthogonally.  This is one of the geodesics $\ell, \ell_A, \ell_B$.  Otherwise said,  the connectors project to  transversals in the quotient $M$.

 Thus we have proved Theorem~A.  We state it in two parts.

 \begin{thm}\label{thm:connectors}  [Theorem A in $\HH^3$] Let $G=\langle A,B \rangle \subset \Isom(\HH^3)$ be discrete, free and   purely loxodromic.   For each rational $p/q$, there is a geodesic    $\gamma_{p/q}$ in the quotient manifold $\HH^3/G$ with $2(p+q-1)$ marked points joined in pairs by geodesics
$\omega_j^{p/q}$, $j=2, \ldots, p+q$ that meet $\gamma_{p/q}$ orthogonally.
\end{thm}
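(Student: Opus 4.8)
The plan is to assemble, for a fixed rational $p/q$, the objects already constructed and to read off the geometry of their images in $M=\HH^3/G$; it suffices to treat $p/q>0$, the cases $-1<p/q<0$ and $p/q<-1$ being handled by replacing $\F$ with the symmetric fundamental domains $\F_{\BB}$, $\F_{\AA}$ exactly as indicated for the model group in Section~3. Set $\gamma_{p/q}=\pi(Ax_{E_{p/q}})$; since $E_{p/q}$ is loxodromic and $G$ is discrete and torsion free, this is a closed geodesic in $M$. Transporting the model-group labelling through the isomorphism $\GG\to G$, we have the half-turn lines $L_j^{p/q}=W(L_{X_j})$ of $\tilde G$ crossed by a fundamental segment of $Ax_{E_{p/q}}$, the representative pairs $(L_j^{p/q},Ax_{E_{p/q}})$, and — after discarding the finitely many indices at which the angle is right — the connectors $O_j^{p/q}$ of Definition~\ref{connector}, of which Theorem~\ref{thm1} (a purely combinatorial statement about the palindromic enumeration, hence valid verbatim for $G$) leaves exactly $2(p+q-1)$. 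That all this data is well defined, i.e. that the relevant endpoints on $\partial\HH^3$ are distinct, is the content of the well-definedness of $SESI(G)$ already established and uses only that $G$ is free, purely loxodromic and discrete.

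Next I would verify the geometric claims about $\omega_j^{p/q}:=\pi(O_j^{p/q})$. By construction $O_j^{p/q}$ is the common perpendicular of $L_j^{p/q}$ and $Ax_{E_{p/q}}$, so it meets $Ax_{E_{p/q}}$ orthogonally, and being perpendicular to $L_j^{p/q}$ it is setwise fixed by $H_{L_j^{p/q}}$. Since $G$ is normal of index two in $\tilde G$, the element $\overline{E}^j_{p/q}=H_{L_j^{p/q}}E_{p/q}H_{L_j^{p/q}}$ lies in $G$; its axis is $H_{L_j^{p/q}}(Ax_{E_{p/q}})$, and because $H_{L_j^{p/q}}$ is the half-turn about a $G$-translate of one of $L,L_A,L_B$ it induces on $M$ the hyperelliptic involution $\INV$, under which $\gamma_{p/q}$ is invariant by \cite{GKhyp}; hence $\pi(Ax_{\overline{E}^j_{p/q}})=\INV(\gamma_{p/q})=\gamma_{p/q}$. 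As $H_{L_j^{p/q}}$ fixes $O_j^{p/q}$ and exchanges $Ax_{E_{p/q}}$ with $Ax_{\overline{E}^j_{p/q}}$, the line $O_j^{p/q}$ is simultaneously the common perpendicular of $L_j^{p/q}$ with each of the two axes, so it also meets $Ax_{\overline{E}^j_{p/q}}$ orthogonally. Projecting, $\omega_j^{p/q}$ is a geodesic arc in $M$ that meets $\gamma_{p/q}$ orthogonally at the two points $\pi(O_j^{p/q}\cap Ax_{E_{p/q}})$ and $\pi(O_j^{p/q}\cap Ax_{\overline{E}^j_{p/q}})$ and meets one of the seams $\ell,\ell_A,\ell_B$ orthogonally; these are the two marked points that $\omega_j^{p/q}$ joins.

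It remains to match the count, and this is the step that needs care. Upstairs the $2(p+q-1)$ essential indices occur in pairs $j\leftrightarrow j'$: the combinatorial symmetry behind the passage, in Theorem~\ref{thm1}, from $2(p+q-1)$ essential intersections of $E_{p/q}$ to $p+q-1$ intersection points on the quotient surface is exactly the statement that a single self-crossing of $\gamma_{p/q}$ carries the label $Q_j^{p/q}$ when read off along one of its two local branches and the label $Q_{j'}^{p/q}$ along the other. This pairing is invariant under the isomorphism $\GG\to G$, and under it $O_{j'}^{p/q}$ is carried to $O_j^{p/q}$ by an element of $G$, so $\omega_{j'}^{p/q}=\omega_j^{p/q}$ in $M$, the endpoint of $O_j^{p/q}$ on $Ax_{E_{p/q}}$ playing the role of the endpoint of $O_{j'}^{p/q}$ on the second axis and vice versa. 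Hence the $2(p+q-1)$ essential indices yield exactly $p+q-1$ distinct transversals, which I re-index as $\omega_j^{p/q}$, $j=2,\dots,p+q$, and the $2(p+q-1)$ endpoints are organized into $p+q-1$ pairs. The only genuine obstacle is confirming this combinatorial index-pairing — that what upstairs forces coincidences of intersection points downstairs forces coincidences of connectors, and that, away from the Fuchsian locus, no further coincidences occur, so the $2(p+q-1)$ marked points are honestly distinct and the connectors are non-degenerate arcs; this is carried out by transporting the model-group analysis through the canonical isomorphism rather than by any new argument in $\HH^3$.
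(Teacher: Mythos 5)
Your proposal is correct and follows essentially the same route as the paper: transfer the model-group labelling through the isomorphism, take the connectors $O_j^{p/q}$ (common perpendiculars of the pairs in $SESI(G)$), use the index-two half-turn argument together with the $\INV$-invariance of $\gamma_{p/q}$ from \cite{GKhyp} to get orthogonality at both endpoints on $\gamma_{p/q}$, and import the count from Theorem~\ref{thm1}. If anything, you are more explicit than the paper about the final bookkeeping (the pairing of the $2(p+q-1)$ upstairs indices into $p+q-1$ transversals downstairs and the distinctness of the marked points away from the Fuchsian locus), which the paper asserts without further argument.
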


 We also have

 \begin{thm}\label{thm:countloops} For $G$ as above, the union of the curves $\gamma_{p/q}$ and  $\omega_j^{p/q}$, $j=1, \ldots, p+q-1$ naturally divides into a set consisting of  $p+q$ non-homotopically trivial loops.  The order in which the same loop occurs a consecutive number of times is determined by the entries on the continued fraction expansion of $p/q$.
 \end{thm}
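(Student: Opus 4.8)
The plan is to transfer the loop decomposition of Lemma~\ref{loops} from the model group $\GG$ to the abstract group $G$ via the natural isomorphism, using the connectors in place of the intersection points, and then to read off the ``consecutive repetition'' statement from the structure of the word $\EE_{p/q}$. First I would recall that in the model group, Lemma~\ref{loops} exhibited $\gamma_{p/q}$ as a union of $p+q$ loops, where the $j$-th loop comes from the segment of $Ax_{\EE_{p/q}}$ between consecutive half-turn lines $\LL^{p/q}_j$ and $\LL^{p/q}_{j+1}$ together with its image under $\INV$; each such loop is homotopic to $\gamma_0$ or $\gamma_\infty$ according as the generator $\XX_j$ occurring at that step is $\AA$ or $\BB$ (equivalently, according to which of $\ell_\AA,\ell_\BB$ the segment lands on). The combinatorial datum is thus the word $\EE_{p/q}=X_1X_2\cdots X_{p+q}$ read as a sequence in $\{\AA,\BB\}$, and by construction of the palindromic enumeration scheme (Section~2.1) this sequence is exactly the sequence of partial quotients of the continued fraction of $p/q$: a block of $a_1$ copies of one generator, then $a_2$ copies of the other, and so on, where $p/q=[a_0;a_1,a_2,\dots]$. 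So the number of consecutive occurrences of the same loop before switching is precisely $a_k$.

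Next I would carry the geometry over to $\HH^3$. By Theorem~\ref{thm:connectors}, in the quotient manifold $M=\HH^3/G$ we have the geodesic $\gamma_{p/q}$ with $2(p+q-1)$ marked points, joined in pairs by the connectors $\omega_j^{p/q}$, each meeting $\gamma_{p/q}$ orthogonally and each crossing one of the seams $\ell,\ell_A,\ell_B$ orthogonally. I would show that cutting $\gamma_{p/q}$ at the marked points and rejoining the resulting sub-arcs by running back along the appropriate connectors produces exactly $p+q$ closed loops, in direct analogy with the proof of Lemma~\ref{loops}: lift to $Ax_{E_{p/q}}$, use the labelled half-turn lines $L^{p/q}_j=W^{p/q}_j(Y)$ with $Y\in\{L,L_A,L_B\}$ to index the sub-arcs, and pair arc $j$ with its image under the involution $\INV$ induced by the half-turns of $\tilde G$ (which exists and has the seams as fixed set by \cite{GKhyp}, cited before Theorem~A). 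The arc between $Q^{p/q}_{2j}$ (on a lift of $\ell$) and $Q^{p/q}_{2j+1}$ (on a lift of $\ell_{X_j}$), together with its $\INV$-image, closes up into a loop; the loop is determined up to homotopy by which seam $\ell_{X_j}$ it meets, since in the model group the analogous loop was homotopic to $\gamma_0$ or $\gamma_\infty$ and homotopy classes are preserved under the isomorphism $\GG\to G$. Because there are $p+q$ sub-arcs of each type of endpoint-pattern — precisely $p+q$ consecutive segments $[Q^{p/q}_{2j},Q^{p/q}_{2j+1}]$ — one obtains $p+q$ loops, and the order in which a given loop type repeats is the order in which $X_j$ repeats in $\EE_{p/q}$, i.e.\ the continued fraction data.

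The main obstacle I anticipate is the legitimacy of the ``closing up'' step in $\HH^3$. In $\HH^2$ the involution $\INV$ is a reflection, so $\INV(\sigma^{p/q}_j)$ shares an endpoint with $\sigma^{p/q}_j$ on the seam and the union is literally a closed loop; in $\HH^3$ the involution $\INV$ is an orientation-preserving half-turn and $\gamma_{p/q}$ is only setwise invariant, so one must check that $\INV$ carries the marked point on a seam to another marked point on that same seam and that the sub-arc and its image actually meet to form a loop rather than a broken path. This is exactly what the connector formalism is designed to handle: $O^{p/q}_j$ is perpendicular to $L^{p/q}_j$, hence $H_{L^{p/q}_j}$-invariant, and $\overline{E}^j_{p/q}=H_{L^{p/q}_j}E_{p/q}H_{L^{p/q}_j}$ has the same projection $\gamma_{p/q}$, so the relevant marked points are genuinely $\INV$-paired on the seam and the union of the arc with its image, closed through the connector endpoints, is a loop in $M$. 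I would spell this out carefully, and also verify the indexing convention $j=1,\dots,p+q-1$ for the connectors versus $j=2,\dots,p+q$ in Theorem~\ref{thm:connectors} (an off-by-one bookkeeping point about which consecutive pair of half-turn lines bounds which loop, and the omitted index $j=0$ or $j=j^*$ from Definition~\ref{esi1}), so that the count comes out to exactly $p+q$ loops. Everything else is a direct transcription of the two-dimensional argument through the isomorphism, together with the fact that the palindromic enumeration encodes the continued fraction expansion.
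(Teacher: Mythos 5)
Your proposal is correct and follows essentially the same route as the paper: the loops are assembled from arcs of $\gamma_{p/q}$ between consecutive marked points joined up by the two adjacent connectors $\omega_j^{p/q},\omega_{j+1}^{p/q}$, and non-triviality comes from the fact that each loop crosses two distinct seams in $\{\ell,\ell_A,\ell_B\}$. You frame the construction as a transfer of Lemma~\ref{loops} via the involution rather than building the loops directly, and you additionally spell out the loop count and the continued-fraction ordering via the block structure of the word $E_{p/q}$ — details the paper's own proof leaves implicit — so your version is, if anything, the more complete one.
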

 \begin{proof}  Denote the endpoints of  $\omega_j^{p/q}$ on $\gamma_{p/q}$  by $\xi_j, \xi_j'$ so that they are joined by the projection of the  connector $\omega_j^{p/q}$.   Then following $\gamma_{p/q}$ from $\xi_j$ to $\xi_{j+1}$, next following $\omega_{j+1}^{p/q}$ to $\xi_{j+1}'$, next following $\gamma_{p/q}$ to $\xi_j'$ and finally following $\omega_j^{p/q}$ from $\xi_j'$ to $\xi_j$ we obtain a closed loop.   Note that since each loop passes once through two distinct  geodesics in the invariant set of the involution,  $\{\ell, \ell_A, \ell_B\}$,  it cannot be homotopically trivial.

   \end{proof}

  We have used the presentation of the group to define the connectors.  Note that while each connector is a transversal, there are many transversals that are not connectors.  For example,  in the model group $\GG$, $\gamma_0$ has no connectors but many transversals.   One is the projection of the common orthgonal to $Ax_A$ and $L_B$.

  \section{  Winding Groups}

  In this section we define a  class of groups, more general that the class of the model group,   where   the loops formed by the connectors are  a generalization of {\sl winding} around the generators.   Key properties for these groups are that Propositions~\ref{thru F} and \ref{intersectL} hold.   To define these groups we need some preliminaries.

 We denote the {\em convex hull} of the limit set of $G$ in $\HH^3$ by $\mathcal C$.  It is invariant under $G$ and the quotient $\mathcal C/G$ is  the {\em convex core} of $G$.   For the model group $\GG$ with stopping generators, the convex hull of the limit set is formed by deleting all the half-planes bounded by the intervals of discontinuity of $\GG$.  These are determined by the endpoints of the axes of $\AA,\BB,\AA^{-1}\BB$ and all of their conjugates.  The convex core is the surface $S=\HH^2/\GG$ truncated along the geodesics $\gamma_0, \gamma_1$ and $\gamma_{\infty}$.

 The boundary of the convex core $\tilde{S}=\partial\mathcal C/G$ is a surface of genus two.  It is what is known as a {\em pleated surface}.  For the purposes of this paper, what this means is that if more than one geodesic passes through a point on $\tilde{S}$ infinitely many do; that is, the surface looks like a piece of hyperbolic plane at that point.   Points through which only one geodesic passes form the {\em pleating locus} of $\tilde{S}$.  This consists of infinite open geodesics and/or closed simple geodesics. These are also called pleating curves.    If there is a closed geodesic in the pleating locus, it is the projection of the axis of an element of $G$ and this axis lies on the boundary of $\mathcal C$.

 \begin{defn}  The pleating locus of $\tilde{S}$ is called rational if it consists entirely of closed geodesics.  \end{defn}

A special class of groups whose convex core boundary has rational pleating locus are what we term {\em Winding Groups.}
\begin{defn} Let $G=\langle A,B \rangle$ be a group and suppose $\tilde{S}=\partial\mathcal C/G$ has rational pleating locus where the pleating curves are projections of the axes of $A,B, A^{-1}B$. Then $G$ is called a {\em winding group}. \end{defn}

  We have
  \begin{prop}\label{winding groups}  If $G$ is a winding group, then any palindromic word in the palindromic enumeration scheme $E_{p/q}$ ($p+q$ odd) has an axis that intersects the common perpendicular $L$ to the axes of $A$ and $B$ between its intersections with $A$ and $B$.   If $p+q$ is even, the common perpendicular to  $\tilde{E}_{p/q}=H_{L}E_{p/q}H_{L}$ and $E_{p/q}$ intersects the axes of $A$ and $B$ between its intersections with $A$ and $B$.
  \end{prop}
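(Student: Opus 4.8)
The plan is to transport the geometric arguments of Propositions~\ref{thru F} and \ref{intersectL} from the model group $\GG$ to the winding group $G$, using the rationality of the pleating locus to supply the "intervals of discontinuity" structure that made those proofs work in $\HH^2$. The key observation is that the hypothesis "$\tilde S = \partial\mathcal C/G$ has rational pleating locus with pleating curves the projections of $Ax_A$, $Ax_B$, $Ax_{A^{-1}B}$" forces $Ax_A$, $Ax_B$ and $Ax_{A^{-1}B}$ (and all their $G$-translates) to lie on $\partial\mathcal C$, so each of them bounds a flat piece of $\partial\mathcal C$ on one side; the complement of $\mathcal C$ in $\HH^3$ near such an axis is an open half-space-like region that no other axis of a group element can enter. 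This is the three-dimensional replacement for the disjoint intervals of discontinuity separated by the three axes.

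First I would record that, because $A,B$ are a primitive pair generating $G$, the right-angled hexagon $\H(A,B)$ with sides $Ax_A, L_A, Ax_{A^{-1}B}, L_B, Ax_B, L$ is defined exactly as in Section~\ref{background}, and its $H_L$-image gives the analogue of $\F = \H(A,B)\cup H_L(\H(A,B))$; the three "axis sides" $Ax_A, Ax_B, Ax_{A^{-1}B}$ lie on $\partial\mathcal C$ by the winding hypothesis. Next I would run the argument of Proposition~\ref{thru F} verbatim: the three axis sides of $\F$ are each contained in $\partial\mathcal C$ and each cuts off a component of $\HH^3\setminus\mathcal C$; since these three axes are mutually disjoint (which holds because their projections are the disjoint simple pleating curves), no axis of any element of $G$ meets any axis side of any $G$-copy of $\F$. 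In particular $Ax_{E_{p/q}}$ enters and leaves $\F$ through half-turn sides only. Then the argument of Proposition~\ref{intersectL} carries over word for word: write $E_{p/q}$ as a product of half-turns about the half-turn sides; an entry through a conjugate of $L_A$ and exit through a conjugate of $L_B$ (or vice versa) would force an adjacent pair $H_{L_A}H_{L_B}$, i.e. a syllable $A^{-1}B$ in the word, contradicting Remark~\ref{samesign} for $p/q>0$. Hence $Ax_{E_{p/q}}$ must cross a conjugate of $L$ inside $\F$, and in particular crosses $L$ itself between its meeting points with $Ax_A$ and $Ax_B$ — which for $p+q$ odd is exactly the palindromic case asserted. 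For $p+q$ even one argues with the pair of conjugate products $E_{p/q}=E_{l/m}E_{r/s}$ and $\tilde E_{p/q}=E_{r/s}E_{l/m}=H_L E_{p/q} H_L$ exactly as in the paragraphs preceding Definition~\ref{esi1}: the common perpendicular to these two axes is $H_L$-invariant, so it meets $L$, and the same half-turn/syllable argument shows this crossing lies inside $\F$, hence between the $A$- and $B$-intersections with $L$.

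The main obstacle I expect is justifying cleanly that the rational-pleating hypothesis really does put $Ax_A, Ax_B, Ax_{A^{-1}B}$ on $\partial\mathcal C$ with the right separation property — i.e. that the three flat pieces they bound are genuinely disjoint and that no $G$-translate of any of them overlaps $\H(A,B)$ in an axis side. In $\HH^2$ this was immediate from the combinatorics of intervals of discontinuity on $S^1$; in $\HH^3$ one needs the fact (from \cite{GKhyp} and the structure of pleated surfaces recalled just above the statement) that the pleating curves are simple and mutually disjoint on $\tilde S$, together with the fact that $\mathcal C$ is the intersection of the half-spaces it bounds, so that a geodesic in $\HH^3$ whose projection is a closed geodesic and which does not lie on $\partial\mathcal C$ stays in the interior of $\mathcal C$ and thus cannot reach an axis side. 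Once that separation is in hand, both halves of the proof are purely the $\HH^2$ arguments reused, and the proposition follows.
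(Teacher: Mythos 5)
Your proposal has a genuine gap at its central step. You propose to run the arguments of Propositions~\ref{thru F} and \ref{intersectL} ``verbatim'' in $\HH^3$: follow $Ax_{E_{p/q}}$ as it enters and leaves translates of $\F=\H(A,B)\cup H_L(\H(A,B))$ through half-turn sides, and then apply the syllable argument (an adjacent pair $H_{L_A}H_{L_B}$ forcing a letter $A^{-1}B$, contradicting Remark~\ref{samesign}). But in $\HH^3$ the right-angled hexagon $\H(A,B)$ is a skew hexagon: $\F$ is a one-dimensional piecewise-geodesic polygon, not a region, and the paper states explicitly that it is ``not a fundamental domain.'' There is no tiling of $\HH^3$ by copies of $\F$, a generic geodesic meets no translate of any side, and so ``entering through $L_A$,'' ``leaving through $L_B$,'' and the cutting-sequence bookkeeping built on these notions have no meaning here. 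In particular, your derivation in the odd case that $Ax_{E_{p/q}}$ meets $L$ at all rests entirely on this non-existent combinatorial structure. (The obstacle you flag at the end --- whether rational pleating really puts $Ax_A$, $Ax_B$, $Ax_{A^{-1}B}$ on $\partial\mathcal C$ --- is not the real difficulty; that part of your argument is essentially fine.)

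The paper's proof is a short, direct convexity argument that bypasses all of this. For $p+q$ odd, the palindromic property gives $H_LE_{p/q}H_L=E_{p/q}^{-1}$, so $H_L$ preserves $Ax_{E_{p/q}}$ while reversing its orientation, and hence $Ax_{E_{p/q}}$ must cross $L$; this symmetry, not a cutting sequence, is the source of the intersection with $L$. Since no axis can cross $\partial\mathcal C$ transversally, the crossing point lies in the convex hull, and because $Ax_A$ and $Ax_B$ lie on $\partial\mathcal C$ (they are pleating curves), the portion of $L$ inside $\mathcal C$ is exactly the segment between $L\cap Ax_A$ and $L\cap Ax_B$; this gives ``between.'' The even case is handled the same way using the $H_L$-symmetry of the pair $E_{p/q}$, $\tilde E_{p/q}$, which you do identify correctly. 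If you replace the tiling argument by this palindromic-symmetry-plus-convexity argument, your proof closes.
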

  \begin{proof}  This follows from the definition of the convex hull.  No axis can intersect the boundary of the convex hull transversally; if it intersects the boundary it lies entirely in the boundary.  If $p+q$ is odd, the palindromic property implies the axis of $E_{p/q}$ intersects $L$ so it must do it inside the convex hull.   If $p+q$ is even, the axes of $\tilde{E}_{p/q}$ and $E_{p/q}$ lie inside the convex hull.  By the symmetry in the definition their  common orthogonal intersects $L$ and by the convexity the intersection point is inside the convex hull.
  \end{proof}

  The hexagons $\H(A,B), \overline{\H(A,B)}$ are well defined for $G$ and so we can think of  their union $\F$ as a piecewise geodesic polygon in $\HH^3$, but not a fundamental domain.   The lines $L^{p/q}_j$ are sides of translates of these hexagons.  For a winding group, the axis sides of the hexagon project to pleating curves.
  The content of Proposition~\ref{winding groups} is that Propositions~\ref{thru F} and \ref{intersectL} hold for winding groups with this definition of $\F$.   The generators of the winding groups play the role of the stopping generators of the model group.

  Thus we have a geometric interpretation of Theorems~\ref{thm1} and ~\ref{thm:countloops} for these groups.    We summarize this as
  \begin{thm}\label{thm:summary}[Theorem B] Given a winding group $G$,   for every primitive geodesic in the quotient manifold, there is an analogue of the winding we found in the two dimensional case that reflects the rational number identified with it.   \end{thm}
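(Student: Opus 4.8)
The strategy is to observe that, for a winding group $G$, Proposition~\ref{winding groups} supplies exactly the two facts — the analogues of Proposition~\ref{thru F} and Proposition~\ref{intersectL} — on which the entire two-dimensional loop count of Section~\ref{background} and Section~3 rests, so that the argument of Lemma~\ref{loops} and the remarks following it transfer essentially verbatim, once $\F = \H(A,B)\cup\overline{\H(A,B)}$ is read as the piecewise-geodesic polygon built from the Fenchel hexagons of $G$ (not as a fundamental domain). In other words, the proof consists of checking that the hypotheses of Theorems~\ref{thm1} and~\ref{thm:countloops} acquire their geometric meaning for this class of groups and then quoting those theorems.

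First I would fix a winding group $G = \langle A,B\rangle$ and a primitive geodesic $\gamma_{p/q} = \pi(Ax_{E_{p/q}})$ in $M = \HH^3/G$ with $p/q>0$; the conventions $-1<p/q<0$ and $p/q<-1$ are handled by passing to $\F_B$ or $\F_A$ and replacing $L$ by $L_B$ or $L_A$, exactly as for the model group. Since for a winding group the axis sides of each translate of $\F$ project to pleating curves and no axis of a group element can meet the boundary of the convex hull $\mathcal C$ transversally, $Ax_{E_{p/q}}$ can enter and leave a translate of $\F$ only through half-turn sides — this is Proposition~\ref{thru F} in the present setting — and, combining the palindromic or conjugate-pair structure from Section~3 with Proposition~\ref{winding groups} and Remark~\ref{samesign}, it meets the lift of the seam $\ell$ inside $\F$ and never meets a lift of $\ell_A$ immediately followed by a lift of $\ell_B$ — this is Proposition~\ref{intersectL}. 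Consequently the combinatorial labelling of Section~3 (the translates $\F_j$, the half-turn lines $L_j^{p/q}$, the points $Q_j^{p/q}$, and the connectors $\omega_j^{p/q}$ of Theorem~\ref{thm:connectors}) is available without change.

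Next I would re-run the proof of Lemma~\ref{loops} and of Theorem~\ref{thm:countloops}. The segment of $Ax_{E_{p/q}}$ in each $\F_j$ projects to an arc $\sigma_j^{p/q}$ running from $\ell$ to one of $\ell_A$, $\ell_B$, so $\sigma_j^{p/q}\cup\INV(\sigma_j^{p/q})$ is a closed loop crossing $\ell$ once and one of $\ell_A$, $\ell_B$ once; since $\gamma_0 = \pi(Ax_A)$ and $\gamma_\infty = \pi(Ax_B)$ decompose in exactly this way as $\sigma_0\cup\INV(\sigma_0)$ and $\sigma_\infty\cup\INV(\sigma_\infty)$, each such loop is freely homotopic in $M$ to $\gamma_0$ or to $\gamma_\infty$. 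Because $A$ occurs $p$ times and $B$ occurs $q$ times in the primitive word $E_{p/q}$, exactly $p$ of the $p+q$ loops are homotopic to $\gamma_0$ and $q$ to $\gamma_\infty$, so $\gamma_{p/q}$ winds $p$ times around $\gamma_0$ and $q$ times around $\gamma_\infty$, and the order in which consecutive windings occur is read off from the continued fraction expansion of $p/q$ just as in Theorem~\ref{thm:countloops}. For $p+q$ even the same argument is carried out with the conjugate pair $Ax_{E_{p/q}}$, $Ax_{\tilde E_{p/q}}$ using the second clause of Proposition~\ref{winding groups}, the two half-axes together furnishing the fundamental segment; the loop count is unchanged.

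I expect the main obstacle to be making the geometric interpretation genuinely rigorous rather than merely algebraic: one must check that the convexity of $\mathcal C$ and the rationality of the pleating locus really force $Ax_{E_{p/q}}$ to cross the lifts of $\ell, \ell_A, \ell_B$ in the same cyclic pattern as in the pair-of-pants model, and in particular that the three seams meet the pleating curves $\gamma_0, \gamma_{-1}, \gamma_\infty$ with the same incidence pattern as on a pair of pants — each seam orthogonal to two of them and disjoint from the third — so that a loop crossing $\ell$ and $\ell_A$ is forced into the homotopy class of $\gamma_0$ and not that of $\gamma_\infty$ or $\gamma_{-1}$. This is where the hypothesis that the pleating locus is precisely $\{\pi(Ax_A),\pi(Ax_B),\pi(Ax_{A^{-1}B})\}$, together with the description in \cite{GKhyp} of the fixed set of $\INV$ as exactly $\{\ell,\ell_A,\ell_B\}$, does the real work; once the incidence pattern is pinned down, the homotopy classification of the loops — and hence the entire winding statement — follows from the algebraic bookkeeping already in place.
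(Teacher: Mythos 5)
Your proposal is correct and follows essentially the same route as the paper: the paper likewise derives Theorem~B by observing that Proposition~\ref{winding groups} gives the analogues of Propositions~\ref{thru F} and~\ref{intersectL} for the polygon $\F=\H(A,B)\cup\overline{\H(A,B)}$, lets the generators play the role of stopping generators, and then transfers the loop decomposition of Lemma~\ref{loops} and Theorems~\ref{thm1} and~\ref{thm:countloops}. Your write-up is in fact more detailed than the paper's (which states this transfer in two sentences), and your closing caveat about pinning down the seam--pleating-curve incidence pattern correctly identifies the one point the paper leaves implicit.
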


 \bibliographystyle{amsplain}

\begin{thebibliography}{99}


\bibitem{Fenchel} Fenchel, Werner {\sl Elementary Geometry in Hyperbolic Space}
deGruyter, 1989.



\bibitem{GKhyp} Gilman, Jane  and Keen, Linda, {\sl Two generator Discrete Groups: Hyperelliptic Handlebodies}
Geom. Dedicata, Volume 110(no. 1), (2205), 159-190.

\bibitem{GKwords} Gilman, Jane  and Keen, Linda, {\sl
Word Sequence and Intersection Numbers},
 Cont. Math {\bf 311}
 (2002), 331-349.

\bibitem{Palindrome} Gilman, Jane and Keen, Linda, {\it
Discreteness Criteria and the Hyperbolic Geometry of Palindromes},
Journal of Conformal Geometry and Dynamics 13 (2009),


\bibitem{Enumeration} Gilman, Jane and Keen, Linda
    {\it Enumerating Palindromes in Rank Two Free Groups}, {\sl Journal of  Algebra},
 (2011) 1-13.
\bibitem{GKsemialg} Gilman, Jane and Keen, Linda, {\sl Canonical Hexagons and the $PSL(2,\CC)$ Discreteness Problem} preprint,
arXiv.


\bibitem{GM} Gilman, Jane  and Maskit, Bernard,
   {\it An Algorithm for Two-generator Discrete Groups}  {\sl Michigan Math J \bf 38 }
(1991),  13-32.

\bibitem{KS1} Keen, Linda; Series, Caroline,
{\sl Pleating Coordinates for Teichm\"{u}ller Space},
 Bull. AMS, vol. 26, 1, 1992, 141-146

\bibitem{MKS} Magnus, Wilhelm;  Karass, Abraham; and Solitar, Donald {\sl Combinatorial Group
Theory} (1966) John Wiley \& Sons, NYC.


\bibitem{Vidur} Mailik, Vidur, {\sl Curves Generated on Surfaces
by the Gilman-Maskit Algorithm,} AMS-CONM, vol.  510 (2010), 209-239.

\end{thebibliography}

\end{document}